\titleformat{\section}[block]{\scshape\filcenter\Large}{\thesection.}{.5em}{}
\titleformat{\subsection}[block]{\bfseries\filcenter\large}{\thesubsection.}{.5em}{\medskip}
\titleformat{\subsubsection}[runin]{\bfseries}{\thesubsubsection.}{.5em}{}[.]
\titlespacing{\subsubsection}{0pt}{10pt}{.5em}
\newtheoremstyle{ntheorem}%
	{\topsep}{\topsep}{\itshape}{0pt}{\bfseries}{.}{.5em}%
	{\thmnumber{#2.\hspace{.5em}}\thmname{#1}\thmnote{ (#3)}}
\newtheoremstyle{ndefinition}%
	{\topsep}{\topsep}{\normalfont}{0pt}{\bfseries}{.}{.5em}%
	{\thmnumber{#2.\hspace{.5em}}\thmname{#1}\thmnote{ (#3)}}
\newtheoremstyle{nremark}%
	{\topsep}{\topsep}{\normalfont}{0pt}{\itshape}{.}{.5em}%
	{\thmnumber{}\thmname{#1}\thmnote{ (#3)}}
\theoremstyle{ntheorem}
  	\newtheorem{theorem}[subsubsection]{Theorem}
  	\newtheorem{proposition}[subsubsection]{Proposition}
	\newtheorem{lemma}[subsubsection]{Lemma}
  	\newtheorem{corollary}[subsubsection]{Corollary}
\theoremstyle{ndefinition}
	\newtheorem{example}[subsubsection]{Example}
	\edef\Drop@@{%
		\dimen@=#1\relax
		\dimen@=.5\dimen@
		\A@=-\sinDirection\dimen@
		\B@=\cosDirection\dimen@
		\setboxz@h{%
			\setbox2=\hbox{\kern3\A@\raise3\B@\copy\z@}%
			\dp2=\z@ \ht2=\z@ \wd2=\z@ \box2
			\setbox2=\hbox{\kern\A@\raise\B@\copy\z@}%
			\dp2=\z@ \ht2=\z@ \wd2=\z@ \box2
			\setbox2=\hbox{\kern-\A@\raise-\B@\copy\z@}%
			\dp2=\z@ \ht2=\z@ \wd2=\z@ \box2
			\setbox2=\hbox{\kern-3\A@\raise-3\B@ \noexpand\boxz@}%
			\dp2=\z@ \ht2=\z@ \wd2=\z@ \box2
		}%
		\ht\z@=\z@ \dp\z@=\z@ \wd\z@=\z@ \noexpand\styledboxz@
	}%
\xydef@\Tttip@{\kern2pt \vrule height2pt depth2pt width\z@
	\Tttip@@ \kern2pt \egroup
	\U@c=0pt \D@c=0pt \L@c=0pt \R@c=0pt \Edge@c={\circleEdge}%
	\def\Leftness@{.5}\def\Upness@{.5}%
	\def\Drop@@{\styledboxz@}\def\Connect@@{\straight@{\dottedSpread@\jot}}}
\xydef@\Tttip@@{%
	\dimen@=.25\dimen@
 	\B@=\cosDirection\dimen@
	\setboxz@h\bgroup\reverseDirection@\line@ \wdz@=\z@ \ht\z@=\z@ \dp\z@=\z@
	{\vDirection@(1,-1)\xydashl@ \xyatipfont\char\DirectionChar}%
	{\vDirection@(1,+1)\xydashl@ \xybtipfont\char\DirectionChar}%
}
\xydef@\ar@form{
	\ifx \space@\next \expandafter\DN@\space{\xyFN@\ar@form}%
	\else\ifx ^\next \DN@ ^{\xyFN@\ar@style}\edef\arvariant@@{\string^}%
	\else\ifx _\next \DN@ _{\xyFN@\ar@style}\edef\arvariant@@{\string_}%
	\else\ifx 0\next \DN@ 0{\xyFN@\ar@style}\def\arvariant@@{0}%
	\else\ifx 1\next \DN@ 1{\xyFN@\ar@style}\def\arvariant@@{1}%
	\else\ifx 2\next \DN@ 2{\xyFN@\ar@style}\def\arvariant@@{2}%
	\else\ifx 3\next \DN@ 3{\xyFN@\ar@style}\def\arvariant@@{3}%
	\else\ifx 4\next \DN@ 4{\xyFN@\ar@style}\def\arvariant@@{4}%
	\else\ifx \bgroup\next \let\next@=\ar@style
	\else\ifx [\next \DN@[##1]{\ar@modifiers{[##1]}}
	\else\ifx *\next \DN@ *{\ar@modifiers}%
	\else\addLT@\ifx\next \let\next@=\ar@slide
	\else\ifx /\next \let\next@=\ar@curveslash
	\else\ifx (\next \let\next@=\ar@curveinout 
	\else\addRQ@\ifx\next \addRQ@\DN@{\ar@curve@}%
	\else\addLQ@\ifx\next \addLQ@\DN@{\xyFN@\ar@curve}%
	\else\addDASH@\ifx\next \addDASH@\DN@{\defarstem@-\xyFN@\ar@}%
	\else\addEQ@\ifx\next \addEQ@\DN@{\def\arvariant@@{2}\defarstem@-\xyFN@\ar@}%
	\else\addDOT@\ifx\next \addDOT@\DN@{\defarstem@.\xyFN@\ar@}%
	\else\ifx :\next \DN@:{\def\arvariant@@{2}\defarstem@.\xyFN@\ar@}%
	\else\ifx ~\next \DN@~{\defarstem@~\xyFN@\ar@}%
	\else\ifx !\next \DN@!{\dasharstem@\xyFN@\ar@}%
	\else\ifx ?\next \DN@?{\ar@upsidedown\xyFN@\ar@}%
	\else \let\next@=\ar@error
	\fi\fi\fi\fi\fi\fi\fi\fi\fi\fi\fi\fi\fi\fi\fi\fi\fi\fi\fi\fi\fi\fi\fi \next@}
\newcommand{\fll}{\longrightarrow}
\newcommand{\fl}{\to}
\newcommand{\dfl}{\Rightarrow}
\newcommand{\dfll}{\xymatrix @C=1.5em {\strut \ar@2 [r] & \strut}}
\newcommand{\tfl}{\Rrightarrow}
\newcommand{\qfl}{\xymatrix@1@C=10pt{\ar@4 [r] &}}
\newcommand{\ofl}[1]{\xymatrix @C=1.5em {\strut \ar [r] ^-{#1} & \strut}}
\newcommand{\opfl}[1]{\xymatrix @C=1.5em {\strut \ar@{->>} [r] ^-{#1} & \strut}}
\newcommand{\odfl}[1]{\xymatrix @C=1.5em {\strut \ar@2 [r] ^-{#1} & \strut}}
\newcommand{\odfll}[1]{\xymatrix @C=2em {\strut \ar@2 [r] ^-{#1} & \strut}}
\newcommand{\otfl}[1]{\xymatrix @C=1.5em {\strut \ar@3 [r] ^-{#1} & \strut}}
\newcommand{\oqfl}[1]{\xymatrix @C=1.5em {\strut \ar@4 [r] ^*+{#1} & \strut}}
\newcommand{\dfln}[1]{\fll}
\newcommand{\rep}[1]{\widehat{#1}}
\renewcommand{\phi}{\varphi}
\renewcommand{\epsilon}{\varepsilon}
\newcommand{\Cr}{\EuScript{C}}
\newcommand{\Lr}{\EuScript{L}}
\newcommand{\Or}{\EuScript{O}}
\newcommand{\Sr}{\EuScript{S}}
\newcommand{\B}{\mathbf{B}}
\newcommand{\C}{\mathbf{C}}
\newcommand{\M}{\mathbf{M}}
\newcommand{\tck}[1]{#1^{\top}}
\renewcommand{\leq}{\leqslant}
\renewcommand{\geq}{\geqslant}
\def\etapes#1{{#1}_{stp}}
\def\support{\mathrm{Supp}}
\def\precmult{\prec_{mul}}
\def\precmultE{\preccurlyeq_{mul}}
\def\res{\cdot}
\def\sigmab{\Sigma(\B_3^+)}
\def\labNF{\psi^\mathrm{NF}}
\def\labQNF{\psi^\mathrm{QNF}}
\def\hhmm{\number\hh:\ifnum\mm<10{}0\fi\number\mm}
\begin{document}
\thispagestyle{empty}

\begin{center}

\begin{Large}\begin{uppercase}
{Coherence of string rewriting systems}
\end{uppercase}\end{Large}

\vskip+6pt

\begin{Large}\begin{uppercase}
{by decreasingness}
\end{uppercase}\end{Large}

\vskip+8pt

\bigskip\hrule height 1.5pt \bigskip

\begin{large}\begin{uppercase}
{Cl\'ement Alleaume -- Philippe Malbos}
\end{uppercase}\end{large}

\vskip+30pt

\begin{small}\begin{minipage}{14cm}
\noindent\textbf{Abstract --}
Squier introduced a homotopical method in order to describe all the relations amongst rewriting reductions of a confluent and terminating string rewriting system. From a string rewriting system he constructed a $2$-dimensional combinatorial complex whose $2$-cells are generated by relations induced by the rewriting rules. When the rewriting system is confluent and terminating, the homotopy of this complex can be characterized in term of confluence diagrams induced by the critical branchings of the rewriting system.
Such a construction is now used to solve coherence problems for monoids using confluent and terminating string rewriting systems.

In this article, we show how to weaken the termination hypothesis in the description of all the relations amongst rewriting reductions. Our construction uses the decreasingness method introduced by van Oostrom. We introduce  the notion of decreasing two-dimensional polygraph and we give sufficient conditions for a decreasing polygraph to be extended in a coherent way. In particular, we show how a confluent and quasi-terminating polygraph can be extended into a coherent presentation.

\smallskip\noindent\textbf{Keywords --} string rewriting systems, coherence, termination, decreasingness.
\end{minipage}\end{small}
\end{center}

\vspace{0.8cm}

\section{Introduction}

At the end of the eighties, using a homological argument, Squier showed that there are finitely presented monoids with a decidable word problem that cannot be presented by a finite convergent (\emph{i.e.}, confluent and terminating) string rewriting system, \cite{Squier87a, Squier87}.
He linked the existence of a finite convergent presentation for a finitely presented monoid to a homological property by showing that the critical branchings of a convergent string rewriting system generate the module of the $2$-homological syzygies of the presentation.
A purely combinatorial approach is then presented in~\cite{Squier94} to the question of whether or not a finitely presented monoid admits a finite convergent presentation. The existence of such a presentation is linked to a finiteness condition of finitely presented monoids, called \emph{finite derivation type}, that extends the properties of being finitely generated and finitely presented.

Beyond the questions of decidability of the word problem and of the existence of finite convergent presentations, the graph-theoretical tools associated to convergent presentations of monoids developped in~\cite{Squier94} were applied to question of coherence problems for monoids (\emph{e.g.}, Artin monoids~\cite{GaussentGuiraudMalbos15} or plactic monoids~\cite{HageMalbos16}) and monoidal categories~\cite{GuiraudMalbos12mscs}. In particular, one of the problems is to compute a \emph{coherent presentation} of a monoid presented by a string rewriting system. Such a presentation extends the generators and the rules by homotopy generators taking into account all the relations amongst the rewriting sequences.
A method is given in \cite{Squier94} to solve this problem from a convergent string rewriting system.  However, in some situations it is difficult to get both confluence and termination on a finite set of generators and a finite set of rules.

In this article, using decreasingness methods from \cite{vanOostrom94}, we show how to weaken the termination hypothesis in the construction of coherent presentations. As an application we show how to extend a confluent and quasi-terminating string rewriting system into a coherent presentation.

\subsubsection*{Squier's two-dimensional complex}
To a string rewriting system $\Sigma_2$ on an alphabet $\Sigma_1$ Squier, Otto and Kobayashi associated in~\cite{Squier94} a $2$-dimensional cellular complex $S(\Sigma)$, defined independently by Kilibarda \cite{Kilibarda97} and Pride \cite{Pride95}.
The complex $S(\Sigma)$ has only one $0$-cell, its $1$-cells are the strings in the free monoid~$\Sigma_1^\ast$ generated by the alphabet $\Sigma_1$ and its $2$-cells are induced by the rewriting rules $\alpha : u\dfl v$ in $\Sigma_2$ and the set $\Sigma_2^-$ of their inverses $\alpha^- : v\dfl u$. 
That is, there is a $2$-cell in $S(\Sigma)$ between each pair of strings with shape $wuw'$ and $wvw'$ such that $\Sigma_2 \sqcup \Sigma_2^-$ contains the relation $u\dfl v$.
This $2$-dimensional complex is extended with \mbox{$3$-cells}, called \emph{Peiffer confluences}, filling all the $2$-spheres of the following form
\[
\xymatrix @R=0.15em @C=2.5em {
& wv_1w'u_2w''
	\ar@2@/^/ [dr] ^-{wv_1w'\alpha_2w''}
\\
wu_1w'u_2w''
	\ar@2@/^/ [ur] ^-{w\alpha_1w'u_2w''}
	\ar@2@/_/ [dr] _-{wu_1w'\alpha_2w''}
&& wv_1w'v_2w''
\\
& wu_1w'v_2w''
	\ar@2@/_/ [ur] _-{w\alpha_1w'v_2w''}
\ar@3 "1,2";"3,2" ^-{}
}
\]
where $\alpha_1 : u_1\dfl v_1$ and $\alpha_2 : u_2 \dfl v_2$ are in $\Sigma_2\sqcup \Sigma_2^-$ and $w$, $w'$ and $w''$ are strings in $\Sigma_1^\ast$.
The Peiffer confluences make homotopic the $2$-cells corresponding to the application of rewriting steps on non-overlapping strings.

A \emph{homotopy basis} of the complex $S(\Sigma)$ is defined as a set $\Sigma_3$ of additional $3$-cells that makes $S(\Sigma)$ aspherical, that is any $2$-dimensional sphere can be ``filled up'' by the $3$-cells of $\Sigma_3$. The presentation~$\Sigma$ is called of finite derivation type (FDT) if it is finite and it admits a finite homotopy basis. 
The FDT property is an invariant property for finitely presented monoids, that is, if $\Sigma$ and~$\Upsilon$ are two finite string rewriting systems that present the same monoid, then $\Sigma$ has FDT if and only if $\Upsilon$ has FDT, \cite{Squier94}.

\subsubsection*{Squier's completion}
Given a convergent string rewriting system $\Sigma$, the set made of one $3$-cell filling a confluence diagram induced by each critical branching forms a homotopy basis of $S(\Sigma)$, \cite{Squier94}. Such a set of $3$-cells is called a \emph{family of generating confluences of $\Sigma$}.
In others words, any diagram defined by two parallel rewriting sequences can be filled up by confluence diagrams induced by the critical branchings and by the Peiffer confluences. This result corresponds to a homotopical version of Newman's Lemma,~\cite{Newman42}.
In particular, when the presentation is finite, it has finitely many critical branchings, hence a finite family of generating confluences. This is a way to prove that finite convergent presentations have FDT,~\cite{Squier94}.

\subsubsection*{Squier's completion without termination}
The above result starts from a convergent presentation and the construction of homotopy bases is made by Noetherian induction. In some situations, it is difficult to get both confluence and termination without adding new generators, as in the case of plactic monoids~\cite{HageMalbos16} or Artin monoids \cite{GaussentGuiraudMalbos15}. Moreover, the addition of new generators implies as much new relations and thus new potentially non confluent critical branchings. For instance, the Artin monoid on the symmetric group $S_2$ is the monoid of braids on three strands $\B_3^+$ generated by two elements $s$ and $t$ and one relation  $sts = tst$. Kapur and Narendran proved that this monoid does not admit a finite convergent presentation with only two generators, \cite{KapurNarendran85}. Note that a finite convergent presentation can be obtained by Knuth-Bendix completion on the presentation with three generators $s$, $t$, $a$ and the two rules $sts\dfl tst$ and $st\dfl a$, where $a$ is a redundant generator.

\subsubsection*{Coherence for quasi-terminating polygraphs}
In this article, we weaken the termination hypothesis and we give a construction of homotopy bases for decreasing and quasi-terminating string rewriting systems. 
The notion of quasi-termination weakens termination in the sense that if there is an infinite rewriting sequence it must contain infinitely many occurrences of the same $1$-cell. In that case, Noetherian induction cannot be used to construct a coherent presentation. For this reason we proceed by using a well-founded labelling on the rewriting system, called the labelling to the quasi-normal form.
For example, the monoid~$\B_3^+$ admits the following confluent and quasi-terminating presentation 
\[
\big\langle s,t \; \big| \; sts \dfl tst, \; tst \dfl sts \big\rangle.
\]
We obtain a homotopy basis of the monoid $\B_3^+$ containing five $3$-cells. This presentation can be homotopically reduced to obtain an empty homotopy basis.

\subsubsection*{Summary of results}
In this work, we use the categorical description of string rewriting systems by $2$-polygraphs, that are recalled in Section 2. 
We introduce the notion of decreasing $2$-polygraph from the corresponding one introduced by van Oostrom for abstract rewriting systems in \cite{vanOostrom94}.
We will use van Oostrom’s decreasingness techniques to prove our main result. However, decreasingness for string rewriting systems needs to take into account the structure of rewriting on strings. In particular, we introduce the notion of Peiffer decreasingness in order to take into account the confluence diagrams induced by application of rewriting steps on non-overlapping strings and the notion of compatibility with contexts for taking into account the contexts of the rules.

In Section 3, we extend Squier's completion known on convergent $2$-polygraphs to decreasing $2$-polygraphs. We define a \emph{Squier's decreasing completion} of  a decreasing $2$-polygraph $\Sigma$ as an extension of $\Sigma$ by the globular extension of loops, containing one 3-cell for each equivalence class of elementary $2$-loop and the globular extension of generating decreasing confluences, containing  a decreasing confluence diagram for each critical branching of $\Sigma$.

Our main result states that a strictly decreasing $2$-polygraph whose labelling is compatible with contexts and Peiffer decreasing can be extended into a coherent presentation, Theorem~\ref{Theorem:MainResult}. As a consequence of this result, we show how to compute a coherent presentation from a confluent and quasi-terminating $2$-polygraph. Finally, we show how our construction generalizes the one given in \cite{Squier94} for convergent rewriting systems and we deduce some homological and homotopical consequences.

\section{Decreasing polygraphs}

In this section, we recall categorical notions used in this work to describe string rewriting systems and relations between rewriting sequences. We refer the reader to \cite{GuiraudMalbos16} for a deeper presentation of these notions.
Then we introduce decreasing $2$-polygraphs from the corresponding notion for abstract rewriting systems introduced by van Oostrom in \cite{vanOostrom94}. 

\subsection{Two-dimensional polygraphs and extended presentations}

\subsubsection{Two-dimensional polygraphs}
A \emph{$1$-polygraph} $\Sigma$ is a directed graph made of a set of $0$-cells $\Sigma_0$, a set of $1$-cells $\Sigma_1$ and source and target maps $s_0, t_0 : \Sigma_1 \fl \Sigma_0$. We denote by $\Sigma_1^\ast$ the free category generated by $\Sigma_1$.
A \emph{globular extension} of the free category $\Sigma_1^\ast$ is a set $\Sigma_2$ equipped with two maps $s_1,t_1: \Sigma_2 \fl \Sigma_1^\ast$ such that, for every $\alpha$ in $\Sigma_2$, the pair
 $(s_1(\alpha),t_1(\alpha))$ is a \emph{$1$-sphere} in the category $\Sigma_1^\ast$, that is, 
$s_0s_1(\alpha)=s_0t_1(\alpha)$ and $t_0s_1(\alpha)=t_0t_1(\alpha)$.
A \emph{$2$-polygraph} is a triple $\Sigma=(\Sigma_0,\Sigma_1,\Sigma_2)$, where $(\Sigma_0,\Sigma_1)$ is a $1$-polygraph and $\Sigma_2$ is a globular extension of $\Sigma_1^\ast$, whose elements are called the \emph{$2$-cells} of the $2$-polygraph.
A presentation of a category $\C$ is a $2$-polygraph such that the quotient of the free category $\Sigma_1^\ast$ by the congruence generated by $\Sigma_2$ is isomorphic to $\C$. Note that a monoid being a category with a single object, is presented in the same way by a $2$-polygraph with only one $0$-cell.

\subsubsection{Free $2$-categories}
Recall that a \emph{$2$-category} (resp. \emph{$(2,1)$-category}) $\Cr$ is a category enriched in category (resp. groupoid). Equivalently, a $(2,1)$-category is a $2$-category in which all $2$-cells are invertible for the $1$-composition. We denote by $\Cr_2$ the set of $2$-cells of $\Cr$ and the \emph{$0$-composition} (resp. \emph{$1$-composition}) of two $2$-cells $f$ and $g$ in $\Cr$ is denoted by $f\star_0 g$, or by $fg$ (resp. $f\star_1 g$) . We will denote by $s_i$ (resp. $t_i$) the \emph{$i$-source map} (resp. \emph{$i$-target map}) defined on $1$-cells and $2$-cells of a $2$-category. A \emph{$2$-sphere} in $\Cr$ is a pair $(f,g)$ of $2$-cells of $\Cr$ such that $s_1(f)=s_1(g)$ and $t_1(f)=t_1(g)$.

Given a $2$-polygraph $\Sigma$, we will denote by $\Sigma_2^\ast$ the free $2$-category generated by~$\Sigma$ and by $\tck{\Sigma}_2$ the free $(2,1)$-category generated by~$\Sigma$, that is the free $2$-category generated by~$\Sigma$ in which all the $2$-cells are invertible.

\subsubsection{Rewriting sequences}
A \emph{rewriting step} with respect to a $2$-polygraph $\Sigma$ is a $2$-cell of $\Sigma_2^\ast$ of the form $u\varphi v$ where $u$ and $v$ are $1$-cells in $\Sigma_1^\ast$ and $\varphi$ is a $2$-cell of $\Sigma_2$. We denote $\etapes{\Sigma}$ the set of rewriting steps of $\Sigma$.
A \emph{rewriting sequence} with respect to $\Sigma$ is a finite or infinite sequence $f_0\cdot f_1 \res \ldots \res f_i \res\, \cdots$, 
where the $f_i$ are rewriting steps such that $t_1(f_i)=s_1(f_{i+1})$ for all $i\geq 0$.
A $1$-cell  $u$ \emph{rewrites into} a $1$-cell~$v$ if there is a rewriting sequence $f_0\res \ldots \res f_n$ such that $s_1(f_0)=u$ and $t_1(f_n)=v$.

For any rewriting sequence $f_0\res f_1 \res \ldots \res f_n$ from $u$ to $v$ there is a corresponding $2$-cell $f_0\star_1 f_1 \star_1 \ldots \star_1 f_n$ in the $2$-category $\Sigma_2^\ast$ with source $s_1(f_0)=u$ and target $t_1(f_n)=v$.
Conversely, any $2$-cell $f$ in the \linebreak $2$-category $\Sigma_2^\ast$ can be decomposed as a composite $f_0 \star_1 \ldots \star_1 f_n$ of rewriting steps. Note that, this decomposition is unique up to Peiffer relations. 

The \emph{length} of a finite rewriting sequence $f$ is the number, denoted by $\ell(f)$, of rewriting steps occurring in the sequence.
Given two $1$-cells $u$ and $v$ such that $u$ can be reduced to $v$, 
the \emph{distance} from $u$ to $v$, denoted by $d(u,v)$, is the length of the shortest rewriting sequence from $u$ to $v$. 

\subsubsection{Support of a $2$-cell}
Let $\Sigma$ be a $2$-polygraph.
Any $2$-cell $f$ in $\Sigma_2^\ast$ can be written as a $1$-composite of finitely many rewriting steps $u_1\varphi_1v_1,\ldots,u_k\varphi_k v_k$, where the $u_i$ and $v_i$ are $1$-cells in $\Sigma_1^\ast$ and $\varphi_i$ is a $2$-cell in $\Sigma_2$. We define the \emph{support} of the $2$-cell $f$ as the multiset, denoted by $\support(f)$, consisting of the $2$-cells $\varphi_i$ occurring in this decomposition. The support is well-defined because any decomposition of $f$ in $\Sigma_2^\ast$ into a $1$-composite of rewriting steps involves the same rewriting steps.  
Note also that any such a decomposition is finite and thus the support of a $2$-cell is a finite multiset. As a consequence, the multiset inclusion is a well-founded order on supports, allowing us to prove some properties by induction on the support of $2$-cells.

\subsubsection{Branchings}
A \emph{(finite) branching} of a $2$-polygraph $\Sigma$ is a pair $(f,g)$ of (finite) rewriting sequences  of $\Sigma$ with a common source $u=s_1(f)=s_1(g)$. 
Such a branching will be denoted by \linebreak $(f,g) : u \dfl (t_1(f),t_1(g))$.
A \emph{confluence} of a $2$-polygraph $\Sigma$ is a pair $(f',g')$ of rewriting sequences  of $\Sigma$ with a common target $v=t_1(f')=t_1(g')$. 
Such a confluence will be denoted by $(f',g') : (t_1(f),t_1(g)) \dfl v$.

A branching $(f,g)$ is \emph{local} (resp. \emph{aspherical}) if $f$ and $g$ are in $\etapes{\Sigma}$ (resp. $f=g$). 
A \emph{Peiffer branching} of $\Sigma$ is a local branching $(fv,ug)$ with source $uv$ where $u,v$ are composable $1$-cells and $f,g$ are in~$\etapes{\Sigma}$. An \emph{overlapping branching} of $\Sigma$ is a local branching that is not aspherical or Peiffer. An overlapping branching is called a \emph{critical branching} if it is minimal for the order $\sqsubseteq$ on local branchings generated by $(f,g) \sqsubseteq (wfw',wgw')$, for any local branching $(f,g)$ composable with $1$-cells $w$ and $w'$ in~$\Sigma_1^\ast$.

\subsubsection{Termination and quasi-termination}
A $2$-polygraph $\Sigma$ is \emph{terminating} if  it  has  no  infinite  rewriting  sequence, that is there is no sequence $(u_n)_{n \in \mathbb{N}}$ of $1$-cells such that for each $n$ in $\mathbb{N}$, there is a rewriting step from $u_n$ to $u_{n+1}$.
In  that  case, every $1$-cell $u$ of $\Sigma_1^\ast$ has at least one normal form $\rep{u}$, that is, there is no rewriting step with source $\rep{u}$. 

Following \cite{Dershowitz87}, we say that a $2$-polygraph $\Sigma$ is \emph{quasi-terminating} if for each sequence $(u_n)_{n \in \mathbb{N}}$ of \linebreak $1$-cells such that for each $n$ in $\mathbb{N}$ there is a rewriting step from $u_n$ to $u_{n+1}$, the sequence $(u_n)_{n \in \mathbb{N}}$ contains an infinite number of occurrences of the same $1$-cell.

Let $\Sigma$ be a $2$-polygraph.
A $1$-cell $u$ of $\Sigma_1^\ast$ is called a \emph{quasi-normal form} if for any rewriting step with source $u$ leading to a 1-cell $v$, there exists a rewriting sequence from $v$ to $u$.
A quasi-normal form of a $1$-cell $u$ is a quasi-normal form $\widetilde{u}$ such that there exists a rewriting sequence from $u$ to $\widetilde{u}$.
If $\Sigma$ is quasi-terminating, any $1$-cell $u$ of $\Sigma_1^\ast$ admits a quasi-normal form. Note that, this quasi-normal form is neither irreducible nor unique in general.

\subsubsection{Example}
Let us consider the $2$-polygraph defined by the following $2$-graph

\vspace{0.5cm}
\[
\xymatrix @R=0.15em @C=2.5em {
c
	\ar@2@/_2ex/ [r]
	\ar@2@<+1ex>@/^5ex/ [rrr]
& d
	\ar@2 [r]
	\ar@2@/_2ex/ [l]
& b
    \ar@2@/_2ex/ [r]
& a
    \ar@2@/_2ex/ [l]
}
\]

\vspace{0.3cm}

\noindent The 1-cell $d$ has two quasi-normal forms which are $a$ and $b$. The 1-cell $c$ is not a quasi-normal form because there is a rewriting step from $c$ to $a$ and $a$ cannot be rewritten into $c$.

\subsubsection{Confluence and convergence}
A $2$-polygraph $\Sigma$ is \emph{confluent} (resp. \emph{locally confluent}) if \linebreak every branching (resp. local branching) $(f,g)$ of $\Sigma$ can be completed by a confluence \linebreak $(f',g') : (t_1(f),t_1(g))~\dfl~v$.
We say that $\Sigma$ is \emph{convergent} (resp. \emph{quasi-convergent}) if it is confluent and it terminates (resp. quasi-terminates).

\subsubsection{Example}
\label{Example:DefinitionSigmaB3+}
The $2$-polygraph $\sigmab=\big\langle s,t \; \big| \; \alpha : sts \dfl tst, \; \beta : tst \dfl sts \big\rangle$
presents the monoid~$\B_3^+$. This polygraph is not terminating but it is quasi-terminating. It has four critical branchings $(\alpha t, s\beta)$, $(\beta s,t \alpha)$, $(\alpha ts, st\alpha)$ and $(\beta st,ts \beta)$. 
These four branchings are confluent as follows
\[
\xymatrix @R=0.15em @C=1.3em {
& tst^2
	\ar@2@/^/ [dr] ^-{\beta t}
\\
stst
	\ar@2@/^/ [ur] ^-{\alpha t}
	\ar@2@/_/ [dr] _-{s\beta}
&& stst
\\
& s^2ts
	\ar@2@/_/ [ur] _-{s\alpha}
}
\;\;
\xymatrix @R=0.6em @C=1.3em {
& sts^2
\\
tsts
	\ar@2@/^/ [ur] ^-{\beta s}
	\ar@2@/_/ [dr] _-{t\alpha}
&& tsts
	\ar@2@/_/ [ul] _-{\beta s}
\\
& t^2st
	\ar@2@/_/ [ur] _-{t\beta}
}
\;\;
\xymatrix @R=0.6em @C=1.3em {
& tst^2s
	\ar@2@/^/ [dr] ^-{\beta ts}
\\
ststs
	\ar@2@/^/ [ur] ^-{\alpha ts}
	\ar@2@/_/ [dr] _-{st\alpha}
&& ststs
\\
& st^2st
	\ar@2@/_/ [ur] _-{st\beta}
}
\;\;
\xymatrix @R=0.6em @C=1.3em {
& sts^2t
\\
tstst
	\ar@2@/^/ [ur] ^-{\beta st}
	\ar@2@/_/ [dr] _-{ts\beta}
&& tstst
	\ar@2@/_/ [ul] _-{\beta st}
\\
& ts^2ts
	\ar@2@/_/ [ur] _-{ts\alpha}
}
\]

\subsubsection{Extended presentations}
Let $\Sigma$ be a $2$-polygraph. A \emph{globular extension} of the $(2,1)$-category~$\tck{\Sigma}_2$ is a set $\Gamma$ together with two maps $s_2,t_2 : \Gamma \fl \tck{\Sigma}_2$ satisfying the \emph{globular relations} $s_1s_2=s_1t_2$ and $t_1s_2=t_1t_2$. Two $2$-cells $f$ and $g$ in $\tck{\Sigma}_2$ are \emph{equal with respect to~$\Gamma$}, and we denote $f \equiv_\Gamma g$, if $f$ and $g$ are equal in the quotient $2$-category $\tck{\Sigma}_2/\Gamma$ of the $2$-category $\tck{\Sigma}_2$ by the congruence on $2$-cells generated by~$\Gamma$.

Relations between rewriting sequences can be described using the notion of extended presentation. Recall from \cite{GuiraudMalbos12advances} that a \emph{$(3,1)$-polygraph} is a pair $(\Sigma,\Sigma_3)$ made of a $2$-polygraph~$\Sigma$ and a globular extension~$\Sigma_3$ of the free $(2,1)$-category $\tck{\Sigma}_2$, that is a set together with two maps \linebreak $s_2,t_2 : \Sigma_3 \fl \tck{\Sigma}_2$ satisfying the \emph{globular relations} $s_1s_2=s_1t_2$ and $t_1s_2=t_1t_2$. We will denote by~$\tck{\Sigma}_3$ the free $(3,1)$-category generated by the $(3,1)$-polygraph $(\Sigma,\Sigma_3)$. An \emph{extended presentation} of a category $\C$ is a $(3,1)$-polygraph whose underlying $2$-polygraph is a presentation of $\C$.

\subsubsection{Coherent presentations}
A \emph{coherent presentation} of a category $\C$ is an extended presentation~$(\Sigma,\Sigma_3)$, such that the globular extension $\Sigma_3$ is a homotopy basis of the $(2,1)$-category $\tck{\Sigma}_2$. That is, for every $2$-sphere $(f,g)$ of $\tck{\Sigma}_2$, there exists a $3$-cell from $f$ to $g$ in the free $(3,1)$-category generated by the $(3,1)$-polygraph~$(\Sigma_2,\Sigma_3)$.

\subsection{Rewriting loops}

In this part, $\Sigma$ denotes a $2$-polygraph.
 
\subsubsection{Equivalent loops}
A \emph{$2$-loop} in the $2$-category $\Sigma_2^\ast$ is a $2$-cell $f$ of $\Sigma_2^\ast$ such that $s_1(f)=t_1(f)$.
Two $2$-loops $f$ and $g$ in $\Sigma_2^\ast$ are \emph{equivalent} if there exist a decomposition $f=f_1\star_1 \ldots \star_1 f_p$, where $f_i$ is a rewriting step of $\Sigma$ for any $1 \leqslant i \leqslant p$, and a circular permutation $\sigma$ such that $g=f_{\sigma(1)} \star_1 \ldots \star_1 f_{\sigma(p)}$ .
This defines an equivalence relation on $2$-cells of $\Sigma_2^\ast$. We will denote by $\Lr(f)$ the equivalence class of a $2$-loop $f$ in $\Sigma_2^\ast$ for this relation.

\begin{lemma}
\label{circulaire}
For any equivalent $2$-loops $f$ and $g$ in $\Sigma_2^\ast$, there exist $2$-cells $h$ and $k$ of $\Sigma_2^\top$  such that $f=h \star_1 g \star_1 k$.
\end{lemma}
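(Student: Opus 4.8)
The plan is to unwind the definition of equivalence and realise a cyclic rotation as a conjugation, exploiting that every $2$-cell of the $(2,1)$-category $\tck{\Sigma}_2$ is invertible for the $1$-composition. First I would fix a decomposition $f=f_1\star_1\cdots\star_1 f_p$ into rewriting steps together with a circular permutation $\sigma$ realising $g=f_{\sigma(1)}\star_1\cdots\star_1 f_{\sigma(p)}$. Since this composite must itself be $1$-composable (consecutive targets and sources have to agree), $\sigma$ can only be a rotation, say $\sigma(i)\equiv i+j \pmod p$ for some $0\leq j<p$; equivalently, $g$ is obtained by cutting the cyclic word $f$ at position $j$. I would then group $f$ into the two blocks $a=f_1\star_1\cdots\star_1 f_j$ and $b=f_{j+1}\star_1\cdots\star_1 f_p$, so that $f=a\star_1 b$ and $g=b\star_1 a$ (with the degenerate case $j=0$ giving $a=1_{s_1(f)}$ and $g=f$).

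The key observation is that $f$ being a $2$-loop forces $s_1(a)=s_1(f)=t_1(f)=t_1(b)$, whence $a\star_1 a^{-1}=1_{s_1(a)}=1_{t_1(b)}$ in $\tck{\Sigma}_2$, where $a^{-1}$ denotes the $\star_1$-inverse of $a$. Setting $h=a$ and $k=a^{-1}$, associativity then yields
\[
h\star_1 g\star_1 k \;=\; a\star_1(b\star_1 a)\star_1 a^{-1}\;=\;a\star_1 b\star_1(a\star_1 a^{-1})\;=\;a\star_1 b\;=\;f,
\]
which is exactly the claimed identity. Along the way I would record the composability checks: $t_1(h)=t_1(a)=s_1(b)=s_1(g)$ makes $h\star_1 g$ defined, $t_1(h\star_1 g)=t_1(g)=t_1(a)=s_1(a^{-1})=s_1(k)$ makes the right multiplication by $k$ defined, and the resulting $2$-cell is again a loop based at $s_1(f)$.

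The argument is essentially bookkeeping: the only genuine content is that a cyclic rotation of a loop is a conjugate, and that conjugation by $a$ can be written as the two-sided product $a\star_1(-)\star_1 a^{-1}$ precisely because $\tck{\Sigma}_2$ is a $(2,1)$-category rather than merely a $2$-category. The main obstacle, such as it is, is making the passage from the combinatorial notion of circular permutation to the rotation $a\star_1 b\mapsto b\star_1 a$ fully precise, and handling the edge cases $j=0$ and $j=p$ where $h$ or $k$ is an identity $2$-cell; once this is set up, the displayed computation closes the proof. An alternative, if one prefers to avoid choosing $j$, is to prove the statement first for a shift by one position, taking $h=f_1$ and $k=f_1^{-1}$, and then to iterate, composing the successive $h$'s on the left and the successive $k$'s on the right.
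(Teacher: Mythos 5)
Your proof is correct and takes essentially the same route as the paper's: both unwind the circular permutation into a rotation $f=a\star_1 b$, $g=b\star_1 a$ and realise it as a conjugation using invertibility in $\Sigma_2^\top$, the paper choosing $k=f_{\sigma(1)}\star_1\cdots\star_1 f_{\sigma(i-1)}$ (your block $b$) and $h=k^-$, while you conjugate by $a$ and $a^{-1}$ instead, a mirror image of the same one-line computation. Your extra composability bookkeeping and the edge case $j=0$ are fine but go beyond what the paper bothers to record.
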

\begin{proof}
Let us decompose $f$ into a sequence  $f=f_1\star_1 \ldots \star_1 f_p$ of rewriting steps and let $\sigma$ be a circular permutation such that $g=f_{\sigma(1)} \star_1 \ldots \star_1 f_{\sigma(p)}$. Let $i$ be the integer such that $\sigma(i)=1$. Let $k$ be the $2$-cell $f_{\sigma(1)} \star_1 \ldots \star_1 f_{\sigma(i-1)}$. Let $h=k^-$ be the inverse of $k$ for the $1$-composition. Then, we have $f=h \star_1 g \star_1 k$.
\end{proof}

\subsubsection{Minimal and elementary loops}
We say that a $2$-loop $f$ in $\Sigma_2^\ast$ is
\begin{enumerate}[{\bf i)}]
\item \emph{minimal with respect to $1$-composition}, if any decomposition $f=g\star_1 h \star_1 k$ in $\Sigma_2^\ast$ with $h$ a $2$-loop implies that~$h$ is either an identity or equal to $f$,
\item \emph{minimal by context}, if there is no decomposition $f=ugv$, where $u$ and $v$ are nonidentity $1$-cells in~$\Sigma_1^\ast$ and $g$ is a loop in~$\Sigma_2^\ast$.
\end{enumerate}

A $2$-loop $f$ in $\Sigma_2^\ast$ is \emph{elementary} if it is minimal both with respect to $1$-composition and by context. As an immediate consequence of these definitions, any $2$-loop $f$ minimal for $1$-composition can be written $f=ugv$, where $g$ is an elementary loop and $u$, $v$ are $1$-cells in $\Sigma_1^\ast$.

\begin{lemma}
\label{décomp}
Let $f$ be a nonidentity $2$-loop in $\Sigma_2^\ast$. Then, there exists a decomposition $f=f_1 \star_1 f' \star_1 f_2$ in $\Sigma_2^\ast$, where $f'$ is a $2$-loop minimal with respect to $1$-composition and $f_1$, $f_2$ are $2$-cells such that $f_1 \star_1 f_2$ is a $2$-loop.
\end{lemma}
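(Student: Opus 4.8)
The plan is to argue by Noetherian induction on $\support(f)$, using that multiset inclusion is a well-founded order on supports, as recorded in the subsection on the support of a $2$-cell. First I would dispose of the case where $f$ is already minimal with respect to $1$-composition: then one simply takes $f'=f$ together with the identities $f_1=1_{s_1(f)}$ and $f_2=1_{t_1(f)}$. Since $f$ is a $2$-loop we have $s_1(f)=t_1(f)$, so $f_1\star_1 f_2$ is defined and equals the identity $2$-cell on $s_1(f)$, which is a trivial $2$-loop, while $f_1\star_1 f'\star_1 f_2 = f$, as required.

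For the inductive step, suppose $f$ is not minimal with respect to $1$-composition. By definition there is a decomposition $f=g\star_1 h\star_1 k$ in $\Sigma_2^\ast$ with $h$ a $2$-loop that is neither an identity nor equal to $f$. I would then observe that the support behaves additively along $1$-composites, so that $\support(f)=\support(g)+\support(h)+\support(k)$ as multisets; since $h\neq f$, the $2$-cells $g$ and $k$ cannot both be identities, whence $\support(h)$ is a proper sub-multiset of $\support(f)$. As $h$ is moreover a nonidentity $2$-loop, the induction hypothesis applies to $h$ and yields a decomposition $h=h_1\star_1 h'\star_1 h_2$ with $h'$ minimal with respect to $1$-composition and $h_1\star_1 h_2$ a $2$-loop. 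Setting $f_1=g\star_1 h_1$, $f'=h'$ and $f_2=h_2\star_1 k$ then gives $f=f_1\star_1 f'\star_1 f_2$ with $f'$ of the required form.

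It remains to check that $f_1\star_1 f_2$ is a $2$-loop, which is the only point requiring care. Composability is inherited from the data: since $h_1\star_1 h_2$ is a well-defined $2$-loop we have $t_1(h_1)=s_1(h_2)$, and the composites $g\star_1 h_1$ and $h_2\star_1 k$ are defined because $f=g\star_1 h\star_1 k$ is; hence $t_1(f_1)=t_1(h_1)=s_1(h_2)=s_1(f_2)$ and $f_1\star_1 f_2=g\star_1 h_1\star_1 h_2\star_1 k$ is defined. For the loop condition, $s_1(f_1\star_1 f_2)=s_1(g)=s_1(f)$ and $t_1(f_1\star_1 f_2)=t_1(k)=t_1(f)$, and these coincide because $f$ is a $2$-loop. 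The hard part is thus not any single step but getting the bookkeeping of the induction right, namely guaranteeing the strict decrease of the support, which rests on the fact that $g$ and $k$ are not both identities when $h\neq f$, and matching the various sources and targets so that all the displayed $1$-composites are legitimate; once these are in place the conclusion is immediate.
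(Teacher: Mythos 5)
Your proof is correct and takes essentially the same route as the paper's: induction on $\support(f)$ with the minimal case disposed of by the trivial decomposition $f=1_{s_1(f)}\star_1 f\star_1 1_{s_1(f)}$, and the non-minimal case handled by applying the induction hypothesis to the inner loop $h$ in a decomposition $f=g\star_1 h\star_1 k$, using the strict inclusion $\support(h)\subsetneq\support(f)$. You merely make explicit the recombination $f_1=g\star_1 h_1$, $f_2=h_2\star_1 k$ and the source/target bookkeeping that the paper's terser argument leaves implicit.
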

\begin{proof}
Let $f$ be a nonidentity $2$-loop in $\Sigma_2^\ast$.
The proof is by induction on the support~$\support(f)$. If the $2$-loop $f$ is minimal for $1$-composition, we can write $f=1_{s_1(f)} \star_1 f \star_1 1_{s_1(f)}$. If~$f$ is not minimal for $1$-composition, there exists a decomposition $f=g\star_1 h \star_1 k$, where $h$ is a $2$-loop that is neither an identity nor equal to $f$. Hence, $\support(h)$ is strictly included in $\support(f)$ that proves the decomposition. 
\end{proof}

\subsubsection{Globular extensions of loops}
\label{Subsubsection:GlobularExtensionsLoops}
We will denote by $\mathcal{E}(\Sigma)$ the set of equivalence classes of elementary $2$-loops of $\Sigma_2^\ast$.
A \emph{loop extension} of $\Sigma$ is a globular extension of the $(2,1)$-category $\tck{\Sigma}_2$ made of a family of $3$-cells $A_\alpha : \alpha \tfl 1_{s_1(\alpha)}$ indexed by exactly one $\alpha$ for each equivalence class in $\mathcal{E}(\Sigma)$.

\begin{lemma}
\label{Lemma:decompositionElementaryLoops}
Let $\Lr(\Sigma)$ be a loop extension  of $\Sigma$.  For any $2$-loop $f$ in $\Sigma_2^\ast$, there exists a $3$-cell from $f$ to~$1_{s_1(f)}$ in the free $(3,1)$-category $\tck{\Lr(\Sigma)}$ generated by the $(3,1)$-polygraph $(\Sigma,\Lr(\Sigma))$.
\end{lemma}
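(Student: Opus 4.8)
The plan is to argue by Noetherian induction on the support $\support(f)$, which is licit since multiset inclusion is a well-founded order on supports. If $f$ is an identity $2$-cell, that is $\support(f)=\emptyset$, then the identity $3$-cell $1_f$ is already a $3$-cell from $f=1_{s_1(f)}$ to $1_{s_1(f)}$ and there is nothing to prove; so assume $f$ is a nonidentity $2$-loop. The first step is to isolate an elementary loop inside $f$. By Lemma~\ref{décomp}, since $f$ is nonidentity there is a decomposition $f=f_1\star_1 f'\star_1 f_2$ in which $f'$ is a nonidentity $2$-loop minimal with respect to $1$-composition and $f_1\star_1 f_2$ is again a $2$-loop. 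Being minimal for $1$-composition, $f'$ can be written $f'=u\star_0 g\star_0 v$ with $g$ an elementary loop and $u,v$ in $\Sigma_1^\ast$; since $f'$ is nonidentity, so is $g$, hence $\support(g)=\support(f')\neq\emptyset$.

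The second step is to contract the elementary loop $g$ using the loop extension. As $g$ is elementary, its class $\Lr(g)$ lies in $\mathcal{E}(\Sigma)$, so $\Lr(\Sigma)$ contains a $3$-cell $A_\alpha:\alpha\tfl 1_{s_1(\alpha)}$ with $\alpha$ equivalent to $g$. By Lemma~\ref{circulaire} — together with the relation $h=k^-$ produced in its proof — we may write $g=k^-\star_1\alpha\star_1 k$ for some $k$ in $\Sigma_2^\top$. Whiskering $A_\alpha$ by $k$ on both sides then yields a $3$-cell $B:=k^-\star_1 A_\alpha\star_1 k$ of $\tck{\Lr(\Sigma)}$ from $g=k^-\star_1\alpha\star_1 k$ to $k^-\star_1 1_{s_1(\alpha)}\star_1 k=k^-\star_1 k=1_{s_1(g)}$, the last equality using that $k^-$ is the $\star_1$-inverse of $k$.

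The final step is reassembly. Whiskering $B$ first by the contexts $u,v$ and then by $f_1,f_2$ produces a $3$-cell $f_1\star_1(u\star_0 B\star_0 v)\star_1 f_2$ from $f=f_1\star_1 f'\star_1 f_2$ to $f_1\star_1 1_{s_1(f')}\star_1 f_2=f_1\star_1 f_2$. Since $f'$ is nonidentity, the support of $f_1\star_1 f_2$ is strictly included in $\support(f)$, and $f_1\star_1 f_2$ is a $2$-loop with $s_1(f_1\star_1 f_2)=s_1(f)$; the induction hypothesis therefore provides a $3$-cell from $f_1\star_1 f_2$ to $1_{s_1(f)}$. Composing the two $3$-cells along their common $2$-cell boundary (the $\star_2$-composition of $\tck{\Lr(\Sigma)}$) yields the desired $3$-cell from $f$ to $1_{s_1(f)}$.

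The delicate part is the bookkeeping of $1$-sources and $1$-targets through the successive $0$-, $1$- and $2$-compositions: specifically, checking that $k^-\star_1 1_{s_1(\alpha)}\star_1 k$ really collapses to the identity $2$-cell $1_{s_1(g)}$, which is precisely where the relation $h=k^-$ from the proof of Lemma~\ref{circulaire} is indispensable, and guaranteeing that the loop $f'$ extracted by Lemma~\ref{décomp} is genuinely nonidentity, so that its support is nonempty, the support of $f_1\star_1 f_2$ strictly decreases, and the Noetherian induction is well-founded.
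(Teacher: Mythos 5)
Your proof is correct and follows essentially the same route as the paper's: induction on $\support(f)$, extraction of a minimal $2$-loop via Lemma~\ref{décomp}, factorization $f'=ugv$ through an elementary loop, contraction of that loop via Lemma~\ref{circulaire} with $h=k^-$ and the generating $3$-cell of $\Lr(\Sigma)$, then whiskering back and applying the induction hypothesis to $f_1\star_1 f_2$. The paper merely organizes the same content into three explicit steps (elementary, minimal, general), and your explicit remark that the extracted $f'$ is nonidentity — needed for the strict support decrease and only implicit in the paper's argument — is accurate.
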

\begin{proof}
Let us fix a loop extension $\Lr(\Sigma)$. Let $f$ be $2$-loop in $\Sigma_2^\ast$. We proceed by induction on the support~$\support(f)$.

\noindent {\bf Step 1.} Suppose that $f$ is elementary. By definition of $\Lr(\Sigma)$, the equivalence class $\Lr(f)$ contains an elementary $2$-loop $e$ such that $\Lr(\Sigma)$ contains a $3$-cell $A_e$ from $e$ to~$1_{s_1(e)}$. The $2$-loop $e$ being equivalent to $f$, by Lemma \ref{circulaire} there exist  two $2$-cells $h$ and $k$ of $\Sigma_2^\top$  such that $f=h \star_1 e \star_1 k$. Thus, the $3$-cell $h \star_1 A_e \star_1 k$ in $\Lr(\Sigma)^\top$ goes from $f$ to $h\star_1 k$. By construction the $2$-cell $h\star_1 k$ is equal $1_{s_1(f)}$. In this way we construct a $3$-cell in $\Lr(\Sigma)^\top$ from $f$ to $1_{s_1(f)}$.

\noindent {\bf Step 2.}
Suppose that $f$ is minimal with respect to $1$-composition. Then, there is a decomposition \linebreak $f=ugv$, where $u$ and $v$ are $1$-cells in $\Sigma_1^\ast$ and $g$ is an elementary $2$-loop in $\Sigma_2^\ast$. By Step 1, there exists a $3$-cell $A_g$ from $g$ to~$1_{s_1(g)}$ in $\Lr(\Sigma)^\top$. Thus $uA_gv$ is a $3$-cell in $\Lr(\Sigma)^\top$ from $f$ to $1_{s_1(f)}$.

\noindent {\bf Step 3.} 
Suppose that $f$ is a nonidentity $2$-loop. By Lemma \ref{décomp}, the $2$-loop $f$ can be written as $f_1 \star_1 f' \star_1 f_2$ where $f'$ is a $2$-loop minimal for $1$-composition and $f_1$ and $f_2$ are $2$-cells such that $f_1 \star_1 f_2$ is a $2$-loop. By Step 2, there exists a $3$-cell $A_{f'}$ in $\tck{\Lr(\Sigma)}$ from $f'$ to $1_{s_1(f')}$. Hence, the $1$-composite $f_1\star_1 A_{f'} \star_1 f_2$ is a $3$-cell from $f$ to $f_1 \star_1 f_2$ in $\Lr(\Sigma)^\top$. The support of $f_1\star_1 f_2$ being strictly included in the support of $f$, this proves the lemma by induction on the support of $f$.
\end{proof}

\subsection{Labelled polygraphs}

\subsubsection{Labelled $2$-polygraphs}
A \emph{well-founded labelled $2$-polygraph} is a data $(\Sigma,W,\prec,\psi)$ made of a $2$-polygraph $\Sigma$, a set $W$, a well-founded order $\prec$ on $W$ and a map $\psi:\etapes{\Sigma} \fll W$. The map $\psi$ is called a \emph{well-founded labelling} of $\Sigma$ and associates to a rewriting step $f$ a \emph{label} $\psi(f)$.

Given a rewriting sequence $f=f_1\res \ldots \res f_k$, we denote by $L^W(f) = \{\psi(f_1),\ldots, \psi(f_k)\}$ the set of labels of rewriting steps in $f$. Note that two distinct rewriting sequences $f$ and $g$ can correspond to a same $2$-cell in the free $2$-category $\Sigma_2^\ast$ despite $L^W(f)$ and $L^W(g)$ being distinct.

\subsubsection{Labelling to the quasi-normal form}
Consider a quasi-convergent $2$-polygraph $\Sigma$. By quasi-termination, any $1$-cell $u$ admits a quasi-normal form, not unique in general. For every $1$-cell $u$ in $\Sigma_1^\ast$, let us fix a quasi-normal form $\widetilde{u}$.
Note that by confluence hypothesis, any two congruent $1$-cells of $\Sigma_1^\ast$ have the same quasi-normal form.
This defines a \emph{quasi-normal form map} $s : \Sigma_1^\ast \fl \Sigma_1^\ast$ sending a \linebreak $1$-cell~$u$ on~$\widetilde{u}$.
The \emph{labelling to the quasi-normal form}, labelling QNF for short, associates to the map $s$ the labelling $\labQNF : \etapes{\Sigma} \fll \mathbb{N}$ defined by
\[
\labQNF(f)=d(t_1(f),\widetilde{t_1(f)}),
\]
for any rewriting step $f$ of $\Sigma$.

\subsubsection{Lexicographic maximum measure, {\cite[Definition 3.1]{vanOostrom94}}}
Let $(\Sigma,W,\prec,\psi)$ be a well-founded labelled $2$-polygraph.
Let $w=w_1\ldots w_n$ and $w'=w'_1\ldots w'_m$ be $1$-cells in the free monoid $W^\ast$ with $w_i$ and $w'_j$ in $W$. We denote by $w^{(w')}$ the $1$-cell $\overline{w}_1\ldots\overline{w}_n$
such that for every $0 \leqslant k \leqslant n$, the $1$-cell $\overline{w}_k$ is defined by
\[
\overline{w}_k = 
\begin{cases}
1 & \text{if $w_k \prec w'_j$ for some $1\leq j \leq m$,}\\
w_k & \text{otherwise.}
\end{cases}
\]

Following {\cite[Definition 3.1]{vanOostrom94}}, we consider the measure $|\cdot |$ from the free monoid $W^\ast$ to the set of multisets over $W$ and defined as follows:
\begin{enumerate}[{\bf i)}]
\item for every $i$ in $W$, the multiset $|i|$ is the singleton $\{i\}$,
\item for every $i$ in $W$ and every $1$-cell $w$ in $W^\ast$, we have  $|iw|=|i| \cup |w^{(i)}|$.
\end{enumerate}
The measure $|\cdot |$ is extended to the set of finite rewriting sequences of $\Sigma$ by setting, for every rewriting sequence $f_1\res\ldots \res f_n$, with $f_i$ labelled by $k_i$ for all $i$, 
\[
|f_1\res\ldots \res f_n|=|k_1\ldots k_n|,
\]
were $k_1\ldots k_n$ is a product in the monoid $W^\ast$. Finally, the measure $|\cdot |$ is extended to the set of finite branchings $(f,g)$ of $\Sigma$, by setting
\[
|(f,g)|=|f| \cup |g|.
\]

Recall from {\cite[Lemma 3.2]{vanOostrom94}}, that for every $1$-cells $w_1$, $w_2$ in $W^\ast$, we have $|w_1w_2|=|w_1| \cup |w_2^{(w_1)}|$.
As a consequence, for any rewriting sequences $f$ and $g$ of $\Sigma$ the following relation holds
\[
|f\res g|=|f| \cup |g^{(f)}|,
\]
where $|g^{(f)}|$ is defined by
\[
|g^{(f)}|=|k_1\ldots k_m^{(l_1\ldots l_n)}|,
\]
with $f=f_1\res\ldots \res f_n$ and $g=g_1\res\ldots \res g_m$ and $f_i$ labelled by $l_i$ and $g_j$ labelled by $k_j$.

\subsection{Decreasing two-dimensional polygraphs}

Let us recall in the context of $2$-polygraph the notion of decreasingness from {\cite[Definition 3.3]{vanOostrom94}}.

\subsubsection{Decreasing $2$-polygraph} 
Let $(\Sigma,\psi)$ be a well-founded labelled $2$-polygraph. A local branching $(f,g)$ of $\Sigma$ is \emph{decreasing} (resp. \emph{strictly decreasing}) if there is a confluence diagram of the following form 
\[
\xymatrix @R=2em @C=2em {
	\ar@2 [rrr] ^-{f}
	\ar@2 [ddd] _-{g}
&&&
	\ar@2 [d] ^-{f'}
\\
&&&
	\ar@2 [d] ^-{g''}
\\
&&&
	\ar@2 [d] ^-{h_1}
\\
	\ar@2 [r] _-{g'}
&
	\ar@2 [r] _-{f''}
&
	\ar@2 [r] _-{h_2}
&
}
\qquad
\raisebox{-1.8cm}{
\text{(resp. }
}
\quad
\raisebox{-0.5cm}{
\xymatrix @R=4.2em @C=4.2em {
	\ar@2 [r] ^-{f}
	\ar@2 [d] _-{g}
&
	\ar@2 [d] ^-{f'}
\\
	\ar@2 [r] _-{g'}
&
}}
\quad 
\raisebox{-1.8cm}{
\text{). }
}
\]
and such that the following properties hold 
\begin{enumerate}[{\bf i)}]
\item $k \prec \psi(f)$, for all $k$ in $L^W(f')$,
\item $k \prec \psi(g)$, for all $k$ in $L^W(g')$,
\item $f''$ is an identity or a rewriting step labelled by $\psi(f)$,
\item $g''$ is an identity or a rewriting step labelled by $\psi(g)$,
\item $k \prec \psi(f)$ or $k \prec \psi(g)$, for all $k$ in $L^W(h_1)\cup L^W(h_2)$.
\end{enumerate}
Such a diagram is then called a \emph{decreasing confluence diagram} (resp. \emph{strictly decreasing confluence diagram}) of the branching $(f,g)$. 

A $2$-polygraph $\Sigma$ is \emph{decreasing} (resp. \emph{strictly decreasing}) if there exists a well-founded labelling $(W,\prec,\psi)$ of $\Sigma$ making all its local branching decreasing (resp. strictly decreasing).

As in the case of abstract rewriting systems, {\cite[Corollary 3.9.]{vanOostrom94}}, we prove that any decreasing $2$-polygraph is confluent.

\subsubsection{Strictly decreasing branching}
We extend the notion of strict decreasingness on local branchings to branchings as follows. A branching $(f,g)$ is \emph{strictly decreasing} is there is a confluence diagram $(f\cdot f',g\cdot g')$ such that the two following properties hold
\begin{enumerate}[{\bf i)}]
\item for each $k'$ in $L^W(f')$, we have $k'\prec k$ for any $k$ in $L^W(f)$,
\item for each $l'$ in $L^W(g')$, we have $l'\prec l$ for any $l$ in $L^W(g)$.
\end{enumerate}

\subsubsection{Decreasingness from quasi-termination}
\label{Subsubsection:DecreasingnessFromQuasiTermination}
Any quasi-convergent $2$-polygraph $\Sigma$ is strictly decreasing with respect to any quasi-normal form labelling $\labQNF$.
Indeed, for any local branching \linebreak $u \dfl (v,w)$ there exists a quasi-normal form $\widetilde{u}$ and a confluence $(f',g') : (v,w) \dfl \widetilde{u}$.
The rewriting sequences $f'$ and $g'$ can be chosen of minimal length, thus making this confluence diagram strictly decreasing with respect the labelling $\labQNF$.

\subsubsection{Decreasingness of Peiffer branchings}
For a Peiffer branching $(f v, u g) : uv \dfl (u'v,uv')$ of a $2$-polygraph $\Sigma$, the confluence $(u'g,fv') : (u'v,uv') \dfl u'v'$ is called the \emph{Peiffer confluence} of the branching $(fv,ug)$.
In a decreasing $2$-polygraph $(\Sigma,\psi)$ every Peiffer branching can be completed into a decreasing confluence diagram. However, the confluence diagram obtained with the Peiffer confluence is not always decreasing as in the case of the following example.

\subsubsection{Example}
\label{Example:a=b}
As shown in \ref{Subsubsection:DecreasingnessFromQuasiTermination}, a labelling QNF makes every Peiffer branching decreasing. But, it does not necessarily makes the Peiffer confluences decreasing. In particular, it is not the case when the source $uv$ of the Peiffer confluence is already the chosen quasi-normal form. For instance, consider the quasi-convergent $2$-polygraph $\Sigma=\langle a,b \; \big| \; \alpha : a \dfl b,\,\beta : b \dfl a\rangle$. For each 1-cel $u$ of~$\Sigma_1^\ast$, we set~$\widetilde{u}=a^{\ell(u)}$ as a quasi-normal form.
Let us now consider the following Peiffer diagram:
\[
\xymatrix @R=0.15em @C=2.5em {
& ab
	\ar@2@/^/ [dr] ^{\alpha b}
\\
a^2
	\ar@2@/^/ [ur] ^-{a\alpha}
	\ar@2@/_/ [dr] _-{\alpha a}
&& b^2
\\
& ba
	\ar@2@/_/ [ur] _-{b\alpha}
}
\]
This Peiffer diagram is not decreasing with respect to $\labQNF$. 
Indeed, we have $\labQNF(\alpha a)=\labQNF(a\alpha)=1$ and $\labQNF(\alpha b) = \labQNF(b\alpha) =2$. However, this Peiffer branching is decreasing by using the following confluence $ (a\beta,\beta a) : (ab,ba) \dfl a^2$, since $\labQNF(a\beta)=\labQNF(\beta a)=0$.

\subsubsection{Peiffer decreasingness}
A decreasing (resp. strictly decreasing) $2$-polygraph $(\Sigma,\psi)$ is \emph{Peiffer decreasing} with respect to a globular extension $\Gamma$ of the $(2,1)$-category $\tck{\Sigma}_2$ if, for any Peiffer branching $(fv,ug): uv \dfl (u'v,uv')$, there exists a decreasing (resp. strictly decreasing) confluence diagram $(fv\res f',ug \res g')$:
\[
\xymatrix @R=0.15em @C=2.5em {
& u'v
	\ar@2@/^/ [dr] ^-{u'g}
	\ar@2@/^5ex/ [drr] ^-{f'}
\\
uv
	\ar@2@/^/ [ur] ^-{fv}
	\ar@2@/_/ [dr] _-{ug}
&& u'v' & u''
\\
& uv'
	\ar@2@/_/ [ur] _-{fv'}
    \ar@2@/_5ex/ [urr] _-{g'}
}
\]
such that
$u'g \star_1 (fv')^- \equiv_\Gamma 
f' \star_1 (g')^-$.

\subsubsection{Example}
\label{Remark:DecreasingInTwoSteps}
Any $2$-polygraph $\Sigma$ such that any non trivial local branching $(f,g)$ is confluent using two rewriting steps $f':t_1(f)\dfl v$ and $g':t_1(g)\dfl v$ is Peiffer decreasing.
Indeed, a labelling such that all rewriting steps have the same label makes any local branching $(f,g)$ decreasing. Moreover, with such a labelling, any Peiffer confluence is decreasing.
In particular, the $2$-polygraph $\sigmab$ is decreasing for a singleton labelling.

\subsubsection{Compatibility with contexts}
Let $(\Sigma,\psi)$ be a well-founded labelled $2$-polygraph. The labelling~$\psi$ is \emph{compatible with contexts} if for any decreasing (resp. strictly decreasing) confluence diagram \linebreak $(f\cdot f', g\cdot g')$, where $(f,g)$ is a local branching, and for any composable $1$-cells $u_1$ and $u_2$ in $\Sigma_1^\ast$, the following confluence diagram is decreasing (resp. strictly decreasing):
\[
\xymatrix @R=0.15em @C=2.5em {
& u_1vu_2
	\ar@2@/^/ [dr] ^-{u_1f'u_2}
\\
u_1uu_2
	\ar@2@/^/ [ur] ^-{u_1fu_2}
	\ar@2@/_/ [dr] _-{u_1gu_2}
&& u_1u'u_2
\\
& u_1wu_2
	\ar@2@/_/ [ur] _-{u_1g'u_2}
}
\]

Note that a labelling QNF is not compatible with contexts in general.

\subsubsection{$\star_0$-compatibility}
A well-founded labelling $(W,\psi,\prec)$ is \emph{$\star_0$-compatible} if for any rewriting steps~$f$ and $g$ such that $\psi(f) \prec \psi(g)$, we have $\psi(u_1fu_2) \prec \psi(u_1gu_2)$ for any composable $1$-cells $u_1$ and~$u_2$ in~$\Sigma_1^\ast$.
Note that the $\star_0$-compatibility does not implies the compatibility with contexts. Indeed, if $(f,g)$ is a local branching that can be completed into a diagram
\[
\xymatrix @R=0.15em @C=2.5em {
& b
    \ar@2@/^/ [dr] ^-{f'}
\\
a
	\ar@2@/^/ [ur] ^-{f}
	\ar@2@/_/ [dr] _-{g}
&&
d
\\
& c
  \ar@2@/_/ [ur] _-{g'}
}
\]
where $f'$ and $g'$ are rewriting steps such that $\psi (f)=\psi (f')$ and $\psi (g)=\psi (g')$, then the confluence diagram is decreasing. Even, if the labelling $(W,\psi,\prec)$ is $\star_0$-compatible,  we do not necessarily have $\psi (ufv)=\psi (uf'v)$ and $\psi (ugv)=\psi (ug'v)$ for any 1-cells $u$ and $v$. Thus, the following diagram is not decreasing in general:
\[
\xymatrix @R=0.15em @C=2.5em {
& ubv
    \ar@2@/^/ [dr] ^-{uf'v}
\\
uav
	\ar@2@/^/ [ur] ^-{ufv}
	\ar@2@/_/ [dr] _-{ugv}
&&
udv
\\
& ucv
  \ar@2@/_/ [ur] _-{ug'v}
}
\]

If $\psi$ is a $\star_0$-compatible labelling, for any strictly decreasing diagram $(f\cdot f',g\cdot g')$, where $(f,g)$ is a local branching, we have 
$\labQNF(u_1f'u_2) < \labQNF(u_1fu_2)$ and $\labQNF(u_1g'u_2) < \labQNF(u_1gu_2)$ for every composable $1$-cells $u_1$ and $u_2$.
As a consequence, any $\star_0$-compatible labelling on a strictly decreasing $2$-polygraph is compatible with contexts.

\subsubsection{Example}
Consider the $2$-polygraph $\Sigma$ defined in \ref{Example:a=b}. The labelling QNF defined using the quasi-normal forms of the form $\widetilde{u}=a^{\ell(u)}$ is compatible with contexts. This is a consequence of the following equality
\[
\labQNF(u_1fu_2)=d(u_1,a^{\ell(u_1)})+\labQNF(f)+d(u_2,a^{\ell(u_2)})
\]
for any rewriting step $f$ and $1$-cells $u_1$ and $u_2$.

If we consider an other labelling QNF of the $2$-polygraph $\Sigma$  associated to quasi-normal forms of the form $\widetilde{u}=a^{\ell(u)}$ for any $1$-cell $u$ such that $\ell(u)\neq 3$ and $\widetilde{u}=b^3$ for any $1$-cell $u$ such that $\ell(u)=3$.
Then the confluence diagram
$(a\alpha \cdot a\beta , \alpha a \cdot \beta a)$ is decreasing with $\labQNF(a\alpha)=\labQNF(\alpha a)= 1$ and $\labQNF(a\beta)=\labQNF(\beta a)=0$. However, the confluence diagram
$(ba\alpha \cdot ba\beta, b\alpha a \cdot b\beta a)$ is not decreasing with $\labQNF(ba\alpha)=\labQNF(b\alpha a)= 1$ and $\labQNF(ba\beta)=\labQNF(b\beta a)=2$. As a consequence this labelling QNF is not compatible with contexts.

\subsubsection{Example}
\label{Example:QNFSigmaB3+}
Consider the $2$-polygraph $\sigmab$ given in~\ref{Example:DefinitionSigmaB3+}. We define a QNF labelling $\labQNF$ on~$\sigmab$ by associating to each $1$-cell $u$ of $\sigmab_1^\ast$ the quasi-normal form $\widetilde{u}$ defined as follows. 
Setting $N_u=\max\{n\;|\;\text{$u=(sts)^nv$ \, holds in $\B_3^+$}\}$, we define $\widetilde{u}= (sts)^{N_u}v$. The maximality of $N_u$ ensures the unicity of such a quasi-normal form. Indeed, let us consider the following convergent presentation of the monoid $\B_3^+$:
\[
\Upsilon=\big\langle s,t,a \; \big| \; sts \dfl a, \; tst \dfl a, sa \dfl at, ta \dfl as \big\rangle.
\]
Suppose that a $1$-cell $u$ of $\sigmab_1^\ast$ has two distinct quasi-normal forms $(sts)^{N_u}v$ and $(sts)^{N_u}w$. Those two 1-cells have respectively $a^{N_u}v$ and $a^{N_u}w$ as normal forms with respect to $\Upsilon_2$. Indeed, there is no occurrence of~$a$ in $v$ and $w$, and the $1$-cells $sts$ and $tst$ cannot divide $v$ and $w$ by maximality of $N_u$. By unicity of the normal forms in a convergent $2$-polygraph, the 1-cells $a^{N_u}v$ and $a^{N_u}w$ are not equal in the monoid $\B_3^+$, hence they are not the normal forms of a same $1$-cell in $\Upsilon_1^\ast$. Thus, the 1-cells $(sts)^{N_u}v$ and $(sts)^{N_u}w$  are not the quasi-normal forms of a same 1-cell in $\sigmab_1^\ast$, which contradicts our assumption.

The labelling defined in this way is $\star_0$-compatible. Indeed, for any rewriting steps $f$ and $g$ of $\sigmab$ such that $\labQNF(g) < \labQNF(f)$ and for any composable $1$-cells $u_1$ and $u_2$, we have \linebreak $\labQNF(u_1fu_2) < \labQNF(u_1gu_2)$.
Hence, the labelling $\labQNF$ is compatible with contexts.

\subsubsection{Multiset order}
\label{Subsubsection:MultisetOrder}
Given a well-founded set of labels $(W,\prec)$, we consider the partial order $\precmult$ on the multisets over $W$ defined in \cite{vanOostrom94, DershowitzManna79} as follows. For any multisets $M$ and $N$ over $W$, we set $M\precmult N$ if there exist multisets $X$, $Y$ and $Z$ such that:
\begin{enumerate}[{\bf i)}]
\item $M=Z \cup X$, $N=Z \cup Y$ and $Y$ is not empty,
\item for every $i$ in $W$ such that $X(i)\neq 0$, there exists $j$ in $W$ such that $Y(j)\neq 0$ and $i\prec j$.
\end{enumerate}
The order $\precmult$ is well-founded because $\prec$ is. We call $\precmultE$ the symmetric closure of $\precmult$.

Let us mention a particular case of {\cite[Lemma 3.6.]{vanOostrom94}}, that will be used in the proof of our main result.

\begin{lemma}
\label{Lemma:vanOostrom}
Let $\Sigma$ be a decreasing $2$-polygraph.
For every diagram in $\Sigma_2^\ast$ of the following form
\[
\xymatrix @R=1em @C=3.5em {
& 
	\ar@2@/^/ [dr] ^(0.6){f_1'}
	\ar@2@/^3ex/ [rr] ^(0.85){f_2}
	&&
\\
	\ar@2@/^/ [ur] ^-{f_1}
	\ar@2@/_/ [dr] _-{g_1}
&&&
\\
&
	\ar@2@/_/ [ur] _(0.6){g_1'}
}
\]
where $f_1$ is a non empty rewriting sequences, $f_2$ and $g_1$ are rewriting sequence and  the confluence diagram $(f_1\cdot f'_1, g_1\cdot g'_1)$ is strictly decreasing, the inequality $|(f'_1,f_2)|\precmult |(g_1,f_1 \res f_2)|$ holds.
\end{lemma}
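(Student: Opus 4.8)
The plan is to unfold both sides of the desired inequality with the product formula for the measure and then exhibit explicit multisets witnessing $\precmult$. Using the relation $\abs{f_1 \res f_2} = \abs{f_1} \cup \abs{f_2^{(f_1)}}$ recalled above, the two branchings have measures
\[
\abs{(f_1',f_2)} = \abs{f_1'} \cup \abs{f_2}, \qquad \abs{(g_1, f_1\res f_2)} = \abs{g_1} \cup \abs{f_1} \cup \abs{f_2^{(f_1)}}.
\]
The multiset $\abs{f_1}$ will play the role of the dominating part. Since the confluence diagram $(f_1\res f_1', g_1\res g_1')$ is strictly decreasing, every element of $L^W(f_1')$ is strictly below every element of $L^W(f_1)$; moreover $\abs{f_1'}$ and $\abs{f_1}$ are sub-multisets of the labels of $f_1'$ and $f_1$ respectively. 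As $f_1$ is non empty, its first rewriting step always contributes to $\abs{f_1}$, so $\abs{f_1} \neq \emptyset$.

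The crux, and the step I expect to be the main obstacle, is to compare $\abs{f_2}$ with $\abs{f_2^{(f_1)}}$ at the level of multiplicities. First I would make explicit which label occurrences of a rewriting sequence survive in its measure: by induction on the length, using the recursive definition of $\abs{\cdot}$ together with $|w_1w_2| = |w_1|\cup|w_2^{(w_1)}|$, the occurrence at position $k$ contributes to the measure exactly when its label is not strictly below any earlier label. Passing from $f_2$ to $f_2^{(f_1)}$ replaces by the identity every label of $f_2$ that is strictly below some label of $f_1$, the \emph{blanked} positions, which are transparent for both the measure and the domination test. A case analysis then yields a multiset decomposition $\abs{f_2} = A_1 \cup D$, where $A_1$ collects the surviving occurrences sitting at non-blanked positions and $D$ those at blanked positions. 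A surviving non-blanked occurrence is not strictly below any earlier label, hence in particular not below any earlier non-blanked label, so it still survives after blanking; this gives $A_1 \subseteq \abs{f_2^{(f_1)}}$. Every element of $D$ is, by definition of the blanking, strictly below some label of $f_1$, hence strictly below some element of $\abs{f_1}$ (if the witnessing label of $f_1$ does not itself survive, replace it along the increasing chain of earlier dominating labels by a surviving one). Keeping track of multiplicities so that these identities and inclusions hold on the nose, rather than only set-theoretically, is the delicate part.

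Writing $\abs{f_2^{(f_1)}} = A_1 \cup A_2$, I would conclude by taking as witnesses for $\precmult$
\[
Z = A_1, \qquad X = \abs{f_1'} \cup D, \qquad Y = \abs{g_1} \cup \abs{f_1} \cup A_2.
\]
Then $\abs{(f_1',f_2)} = Z \cup X$ and $\abs{(g_1,f_1\res f_2)} = Z \cup Y$, and $Y$ is non empty because $\abs{f_1}$ is. Finally every element of $X$ is strictly below some element of $Y$: elements of $\abs{f_1'}$ are below any element of $\abs{f_1}\subseteq Y$ by strict decreasingness, and elements of $D$ are below some element of $\abs{f_1}\subseteq Y$ by the previous step. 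This is precisely the defining condition of $\abs{(f_1',f_2)} \precmult \abs{(g_1,f_1\res f_2)}$, which is the claim and is the instance of \cite[Lemma 3.6]{vanOostrom94} relevant here.
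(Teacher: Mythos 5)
Your argument is correct, but note that the paper does not actually prove this lemma: it is quoted verbatim as a particular case of van Oostrom's Lemma 3.6 in \cite{vanOostrom94} and used as a black box in the proofs of Proposition \ref{strictglobdecr} and Lemma \ref{Lemma1}. What you have done is reconstruct the underlying multiset argument directly, and the reconstruction is sound. The one fact you lean on --- that $\abs{w}$ is exactly the multiset of label occurrences not strictly below any earlier label --- is true and follows by induction on the length of $w$ from $\abs{iw}=\abs{i}\cup\abs{w^{(i)}}$, the key point being transitivity of $\prec$: an occurrence lying below an earlier \emph{blanked} letter is itself below the blanker, so blanking never resurrects a dominated occurrence. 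The same transitivity remark shows your decomposition is sharper than you claim: an occurrence at a non-blanked position of $f_2$ survives in $\abs{f_2^{(f_1)}}$ if and only if it survives in $\abs{f_2}$, so in fact $A_2=\emptyset$ and $\abs{f_2^{(f_1)}}=A_1$; carrying a possibly non-empty $A_2$ inside $Y$ is harmless anyway, since the definition of $\precmult$ in \ref{Subsubsection:MultisetOrder} constrains only $X$. Your chain argument replacing a non-surviving witness in $f_1$ by a surviving element of $\abs{f_1}$ terminates because positions strictly decrease (no appeal to well-foundedness is even needed), and $Y\neq\emptyset$ because the first label of the non-empty $f_1$ always survives, so the three witnesses $Z=A_1$, $X=\abs{f_1'}\cup D$, $Y=\abs{g_1}\cup\abs{f_1}\cup A_2$ do verify the definition of $\abs{(f_1',f_2)}\precmult\abs{(g_1,f_1\res f_2)}$. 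Working with occurrences at positions rather than with bare labels is precisely what makes the multiset identities hold with correct multiplicities, so the bookkeeping you flagged as delicate is already resolved by your setup. Compared with the paper, your route buys a self-contained proof at the cost of re-proving a fragment of van Oostrom's lemma; the paper's citation buys brevity and the greater generality of van Oostrom's statement, of which the present strictly decreasing square is a special case.
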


\begin{proposition}
\label{strictglobdecr}
Let $(\Sigma,\psi)$ be a well-founded labelled $2$-polygraph.
Then $\Sigma$ is strictly decreasing if and only if any branching of $\Sigma$ is strictly decreasing.
\end{proposition}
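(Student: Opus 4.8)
The two directions are very unequal. If every branching of $(\Sigma,\psi)$ is strictly decreasing then, in particular, every \emph{local} branching is; and for a local branching the two definitions agree, since $L^W(f)$ and $L^W(g)$ are then the singletons $\{\psi(f)\}$ and $\{\psi(g)\}$. Hence $\Sigma$ is strictly decreasing, and this implication is immediate. All the content lies in the converse, which I would prove by Noetherian induction on the measure $|(f,g)|$ for the well-founded multiset order $\precmult$ of~\ref{Subsubsection:MultisetOrder}: assuming every local branching strictly decreasing, I show that every branching $(f,g)$ admits a strictly decreasing confluence.

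If $f$ or $g$ is an identity the branching is already a confluence and there is nothing to complete. Otherwise write $f=f_1\star_1 f_2$ and $g=g_1\star_1 g_2$ with $f_1,g_1$ in $\etapes{\Sigma}$. The local branching $(f_1,g_1)$ is strictly decreasing, so it is completed by a square $(f_1\star_1 f_1',g_1\star_1 g_1')$ with common target $c$, every label of $f_1'$ lying below $\psi(f_1)$ and every label of $g_1'$ below $\psi(g_1)$. Superposing this square on $f$ and $g$ produces van Oostrom's tiling, in which three branchings remain to be resolved: $(f_2,f_1')$ issued from $t_1(f_1)$, $(g_2,g_1')$ issued from $t_1(g_1)$, and, once these are filled, the branching between the two paths leaving $c$.

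The heart of the induction is that each residual branching is strictly smaller than $(f,g)$ for $\precmult$. For $(f_2,f_1')$ this is exactly Lemma~\ref{Lemma:vanOostrom} applied to the strictly decreasing confluence $(f_1\star_1 f_1',g_1\star_1 g_1')$ together with the continuation $f_2$, which yields $|(f_1',f_2)|\precmult|(g_1,f_1\star_1 f_2)|$; since the right-hand multiset equals $|(g_1,f)|$ and is contained in $|(f,g)|$, hence $\precmultE|(f,g)|$, one obtains $|(f_1',f_2)|\precmult|(f,g)|$. The branching $(g_2,g_1')$ is treated symmetrically. For the central branching, every label created along the two fillings that prolongs $f_1'$ or $g_1'$ lies below $\psi(f_1)$ or below $\psi(g_1)$, both of which occur in $|(f,g)|$, so its measure is $\precmult|(f,g)|$ as well. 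Applying the induction hypothesis to the three residual branchings and composing the confluences they provide yields a confluence $(f\star_1 f',g\star_1 g')$ of $(f,g)$.

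It then remains to check that this confluence is strictly decreasing, and this is where I expect the real work to be. The completing sequence $f'$ is obtained by concatenating the filling of $f_2$ with one branch of the central confluence, and one must bound every label it carries by a label of $f$; this is done by chaining the strict inequalities furnished at each stage, the labels prolonging $f_2$ staying below $L^W(f_2)\subseteq L^W(f)$ and the labels issued from the central branch staying below a label of $f_1'$, themselves below $\psi(f_1)\in L^W(f)$, and symmetrically for $g'$ against $L^W(g)$. The main obstacle is precisely to organise this label bookkeeping so that the inequalities compose correctly along the whole tiling, taking care of the degenerate configurations where $f_2$, $g_2$, or one of the intermediate completions is an identity, since there the central branch must be attributed to the correct side so that conditions i) and ii) are not violated.
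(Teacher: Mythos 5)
Your proof is correct in outline and shares its skeleton with the paper's: the easy direction is handled the same way (for a local branching the two definitions of strict decreasingness coincide since $L^W(f)=\{\psi(f)\}$), and the converse is a Noetherian induction on $|(f,g)|$ that peels off the first steps $f_1,g_1$, completes the local branching $(f_1,g_1)$ by a strictly decreasing square, and invokes Lemma~\ref{Lemma:vanOostrom} for the measure bounds. Where you genuinely differ is the cut of the tiling: you apply the induction hypothesis three times, to $(f_2,f_1')$, to $(g_2,g_1')$, and to the central branching $(b,b')$ issued from the corner of the local square, whereas the paper applies it only twice --- first to $(f_2,f_1')$ exactly as you do, obtaining a strictly decreasing confluence $(f_2\star_1 k_1,\,f_1'\star_1 k_2)$, and then in one stroke to the branching $(f_1'\text{-side completion prolonged by }k_2,\;g_2)$, that is to $(g_1'\star_1 k_2,\,g_2)$ in your notation, which subsumes your second and third applications. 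The paper's cut buys cleaner bookkeeping: both measure bounds come from Lemma~\ref{Lemma:vanOostrom}, up to absorbing the labels of $k_2$, which all lie below $\psi(f_1)$, and no separate central branching has to be estimated. Your cut instead needs the ad hoc bound $|(b,b')|\precmult|(f,g)|$, and your justification --- labels of $b$ below $\psi(f_1)$, labels of $b'$ below $\psi(g_1)$ --- fails verbatim in precisely the degenerate case you flag: when $f_1'$ is an identity, strict decreasingness of the confluence of $(f_2,f_1')$ puts no constraint on $b$ at all, since condition \textbf{ii)} is vacuous when $L^W(f_1')=\emptyset$, so $b$ may be all of $f_2$; the patch is the splitting $|f_2|=|f_2^{(f_1)}|\cup R$ with every element of $R$ below $\psi(f_1)$, the same device the paper uses inside the proof of Lemma~\ref{Lemma1}. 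Finally, the closing verification that the assembled confluence is strictly decreasing, which you honestly defer as ``the real work'', is asserted at essentially the same level of detail in the paper (for the completions $k_1\star_1 l_1$ and $l_2$); your three-square version merely makes the chain of label inequalities one square longer, so nothing essential is lost, but nothing is gained either.
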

\begin{proof}
One implication is obvious. Let us assume that $\Sigma$ is strictly decreasing and let $(f,g)$ be a branching of $\Sigma$. We prove by induction on $|(f,g)|$ that $(f,g)$ is strictly decreasing. If $f$ or $g$ is an empty rewriting sequence, the strict decreasingness of $(f,g)$ is trivial. Else, we can write
\[
\xymatrix @R=1em @C=2.8em{
& \strut
	\ar@2@/^/ [rr] ^-{f'} 
	\ar@2 [dr] |-{f''}
& \strut
& \strut
\\
\strut 
	\ar@2@/^/ [ur] ^-{f_1}
	\ar@2@/_/ [dr] _-{g_1}
&& \strut
&& \strut
\\
& \strut
	\ar@2 [ur] |-{g''}
	\ar@2@/_/ [dr] _-{g'}
\\
&& \strut
&& \strut
}
\]
such that the confluence diagram $(f_1 \res f'',g_1\res g'')$ is strictly decreasing. By Lemma \ref{Lemma:vanOostrom}, we have $|(f',f'')| \precmult |(f_1,g)|$. Thus, we have $|(f',f'')| \precmult |(f,g)|$ and we can use the induction hypothesis to construct a strictly decreasing confluence diagram $(f' \res k_1,f''\res k_2)$. By using again Lemma \ref{Lemma:vanOostrom}, we have $|(g'' \res k_2,g')| \precmult |(f,g)|$. Thus, by applying again the induction hypothesis, we have a diagram
\[
\xymatrix @R=1em @C=2.8em{
& \strut
	\ar@2@/^/ [rr] ^-{f'} 
	\ar@2 [dr] |-{f''}
& \strut
& \strut
	\ar@2@/^/ [dr] ^-{k_1}
\\
\strut 
	\ar@2@/^/ [ur] ^-{f_1}
	\ar@2@/_/ [dr] _-{g_1}
&& \strut
	\ar@2 [rr] |-*+{k_2}
&& \strut
	\ar@2@/^/ [dd] ^-{l_1}
\\
& \strut
	\ar@2 [ur] |-{g''}
	\ar@2@/_/ [dr] _-{g'}
\\
&& \strut
	\ar@2@/_/ [rr] _-{l_2}
&& \strut
}
\]
where the diagram $(f \res k_1 \res l_1,g \res l_2)$ is strictly decreasing.
\end{proof}

\section{Coherence by decreasingness}

In this section, we extend to decreasing $2$-polygraphs the notion of Squier's completion known for convergent $2$-polygraphs. We give sufficient conditions on the labelling of a decreasing $2$-polygraph making the Squier's decreasing completion a coherent presentation. In particular, we show how to extend a quasi-convergent $2$-polygraph into a coherent presentation.

\subsection{Squier's decreasing completion}
Squier's completion provides a way to extend a convergent $2$-polygraph into a coherent presentation, see \cite{Squier94, GuiraudMalbos16}.

\subsubsection{Squier's completion}
A \emph{family of generating confluences} of a $2$-polygraph $\Sigma$ is a globular extension of the $(2,1)$-category $\tck{\Sigma}_2$ that contains exactly one $3$-cell of the following form
\[
\xymatrix @R=0.15em @C=2.5em{
& v
    \ar@2@/^/ [dr] ^-{f'}
\\
u
	\ar@2@/^/ [ur] ^-{f}
	\ar@2@/_/ [dr] _-{g}
&&
u'
\\
& w
  \ar@2@/_/ [ur] _-{g'}
\ar@3 "1,2"!<0pt,15pt>;"3,2"!<0pt,-15pt> ^-{}
}
\]
for each critical branching $(f,g)$ of $\Sigma$. If $\Sigma$ is confluent, it always admits such a family
A \emph{Squier's completion} of a convergent $2$-polygraph $\Sigma$ is a $(3,1)$-polygraph that extends $\Sigma$ by a chosen family of generating confluences. Any Squier's completion of a convergent $2$-polygraph $\Sigma$ is a coherent presentation of the category presented by $\Sigma$, \cite{Squier94}, see also \cite{GuiraudMalbos16}.

\subsubsection{Generating decreasing confluences}
Let $(\Sigma,\psi)$ be a decreasing $2$-polygraph.
A \emph{family of generating decreasing confluences of $\Sigma$ with respect to $\psi$} is a globular extension of the $(2,1)$-category $\tck{\Sigma}_2$ that contains, for every critical branching $(f,g): u \dfl (v,w)$ of $\Sigma$, exactly one $3$-cell $D_{f,g}^\psi$ of the following form
\[
\xymatrix @R=0.15em @C=2.5em{
& v
    \ar@2@/^/ [dr] ^-{f'}
\\
u
	\ar@2@/^/ [ur] ^-{f}
	\ar@2@/_/ [dr] _-{g}
&&
u'
\\
& w
  \ar@2@/_/ [ur] _-{g'}
\ar@3 "1,2";"3,2" ^-{D_{f,g}^\psi}
}
\]
and where the confluence diagram $(f\res f', g\res g')$ is decreasing with respect to $\psi$.
Any decreasing $2$-polygraph admits such a family of generating decreasing confluences. Indeed, 
any critical branching is local and thus confluent by decreasingness hypothesis. However, note that such a family is not unique in general.

For a strictly decreasing $2$-polygraph $\Sigma$, we define in the same way a \emph{family of generating strictly decreasing confluences} of $\Sigma$, but where the confluence diagrams are strictly decreasing with respect to~$\psi$.

\subsubsection{Squier's decreasing completion}
Let $(\Sigma,\psi)$ be a decreasing $2$-polygraph. A \emph{Squier's decreasing completion of $\Sigma$ with respect to $\psi$} is a $(3,1)$-polygraph that extends the $2$-polygraph $\Sigma$ by a globular extension 
\[
\Or(\Sigma,\psi) \cup \Lr(\Sigma)
\]
where $\Or(\Sigma,\psi)$ is a chosen family of generating decreasing confluences with respect to $\psi$ and $\Lr(\Sigma)$ is a loop extension of $\Sigma$ defined in \ref{Subsubsection:GlobularExtensionsLoops}. If $(\Sigma,\psi)$ is a strictly decreasing $2$-polygraph, a \emph{strictly decreasing Squier's completion} is a Squier's decreasing completion, whose the generating decreasing confluences are required strict.

\begin{lemma}
\label{Lemma1}
Let $(\Sigma,\psi)$ be a strictly decreasing $2$-polygraph.
Let $\Sr^{sd}(\Sigma,\psi)$ be a strictly decreasing Squier's completion of $\Sigma$. Suppose that $\psi$ is compatible with contexts and that $(\Sigma,\psi)$ is Peiffer decreasing with respect to the extension $\Sr^{sd}(\Sigma,\psi)$.
Then, for any $2$-sphere $(f,g)$ in~$\Sigma_2^\ast$, there exists a $3$-cell from $f$ to $g$ in the $(3,1)$-category $\Sr^{sd}(\Sigma,\psi)^\top$.
\end{lemma}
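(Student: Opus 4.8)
The plan is to prove the statement by well-founded induction on the measure $|(f,g)|$ for the order $\precmult$, reducing sphere-filling to a homotopical form of the Church--Rosser property and then importing the measure control already developed for Proposition~\ref{strictglobdecr}. First I would record the reduction. Given a $2$-sphere $(f,g): u \dfl u'$, I view $(f,g)$ as a branching and aim to produce a \emph{strictly decreasing} confluence $(f\res f', g\res g')$ of $(f,g)$ together with a $3$-cell $C : f\star_1 f' \tfl g \star_1 g'$ in $\Sr^{sd}(\Sigma,\psi)^\top$. Since the confluence is strictly decreasing, $(f',g')$ is again a $2$-sphere, now from $u'$, and every label of $f'$ (resp. $g'$) lies strictly below those of $f$ (resp. $g$); hence $|(f',g')| \precmult |(f,g)|$ as soon as $f$ and $g$ are nonempty. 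The induction hypothesis then yields a $3$-cell $P : f' \tfl g'$, and I combine $C$, the whiskering $g\star_1 P^{-}$, and a whiskering by $(f')^{-}$, using that $f' \star_1 (f')^{-}=1_{u'}$ holds in the underlying $(2,1)$-category $\tck{\Sigma}_2$, to obtain the required $3$-cell $f \tfl g$. The degenerate cases where $f$ or $g$ is empty are exactly the case of a $2$-loop and are handled directly by Lemma~\ref{Lemma:decompositionElementaryLoops}, contracting the loop inside $\Lr(\Sigma)^\top \subseteq \Sr^{sd}(\Sigma,\psi)^\top$.

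It remains to produce the strictly decreasing confluence together with its filling $3$-cell, and here I would follow the induction of Proposition~\ref{strictglobdecr} verbatim, decorating each step with a $3$-cell. Peeling the first rewriting steps $f_1$ and $g_1$, the local branching $(f_1,g_1)$ falls into three cases, each of which must be completed into a strictly decreasing confluence carrying a $3$-cell of $\Sr^{sd}(\Sigma,\psi)^\top$:
\begin{enumerate}[{\bf i)}]
\item if $(f_1,g_1)$ is aspherical, the trivial confluence and an identity $3$-cell suffice;
\item if $(f_1,g_1)$ is a Peiffer branching, I invoke the Peiffer decreasingness hypothesis, which furnishes a strictly decreasing confluence $(f_1\res f', g_1 \res g')$ together with the relation $u'g \star_1 (fv')^{-} \equiv_{\Sr^{sd}(\Sigma,\psi)} f' \star_1 (g')^{-}$; transporting this $3$-cell across the interchange (Peiffer) equality $f_1 \star_1 u'g = g_1 \star_1 fv'$, which holds in $\Sigma_2^\ast$, and cancelling $fv' \star_1 (fv')^{-}$ and $(g')^{-}\star_1 g'$, produces the filling $3$-cell;
\item if $(f_1,g_1)$ is overlapping, I factor it as $w\, b_0\, w'$ for a critical branching $b_0$ and contexts $w,w'$ in $\Sigma_1^\ast$, and whisker the generating decreasing confluence $D^\psi_{b_0}$ of $\Or(\Sigma,\psi)$ into this context; compatibility with contexts guarantees that $w D^\psi_{b_0} w'$ fills a \emph{strictly decreasing} confluence.
\end{enumerate}
Lemma~\ref{Lemma:vanOostrom} then ensures that the residual branchings produced by this first splitting have measure $\precmult |(f,g)|$, so the induction hypothesis applies to them; splicing the corresponding $3$-cells by $0$-, $1$- and $2$-composition, exactly as the rewriting paths are spliced in Proposition~\ref{strictglobdecr}, assembles the confluence and its $3$-cell.

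I expect the main obstacle to be the Peiffer case. As Example~\ref{Example:a=b} shows, the canonical Peiffer confluence of a Peiffer branching need not be decreasing, so one cannot simply close it by the interchange relation and must instead transport the interchange equality across the $\equiv_{\Sr^{sd}(\Sigma,\psi)}$ homotopy supplied by Peiffer decreasingness; keeping track of which $3$-cells live in $\Or(\Sigma,\psi)$, in $\Lr(\Sigma)$, or arise from Peiffer decreasingness is the delicate bookkeeping. The second, more routine, difficulty is checking that compatibility with contexts genuinely preserves strict decreasingness when the generating confluence $D^\psi_{b_0}$ is placed in a context, which is precisely what keeps the whole induction inside strictly decreasing confluences and hence licenses the use of Lemma~\ref{Lemma:vanOostrom} at every stage.
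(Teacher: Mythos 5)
Your proposal is correct, and it reaches the conclusion with the same toolbox as the paper --- the aspherical/Peiffer/overlapping trichotomy for local branchings (Peiffer decreasingness supplying the Peiffer case, compatibility with contexts the whiskered overlapping case), Lemma~\ref{Lemma:vanOostrom}, Proposition~\ref{strictglobdecr}, and Lemma~\ref{Lemma:decompositionElementaryLoops} for the degenerate loop case --- but the induction is architected differently. The paper runs a single well-founded induction on $|(f,g)|$ directly over $2$-spheres: it peels one local branching $(f_1,g_1)$ at the source, fills it by the trichotomy ($3$-cell $A$), invokes the \emph{undecorated} Proposition~\ref{strictglobdecr} to complete the branching $(f_1',f_2)$ by sequences $h,k$, and then fills the two residual spheres $(f_2\star_1 k,\, f_1'\star_1 h)$ and $(g_1'\star_1 h,\, g_2\star_1 k)$ by the sphere induction hypothesis; the second of these needs a bespoke multiset estimate (the auxiliary multiset $R$ with $|g_2|=|g_2^{(g_1)}|\cup R$). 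You instead first prove a $3$-cell-decorated version of Proposition~\ref{strictglobdecr} --- every branching admits a strictly decreasing confluence realized by a $3$-cell of $\tck{\Sr^{sd}(\Sigma,\psi)}$ --- by re-running its induction with the trichotomy at the base and inheriting its measure bookkeeping, and then fill spheres by descending along the residual sphere $(f',g')$ at the common reduct, where the required bound $|(f',g')|\precmult|(f,g)|$ is immediate from branching-level strict decreasingness (every label of $f'$ lies below $\psi(f_1)$, which belongs to $|f|$, and likewise for $g'$). Your route is more modular and sidesteps the paper's $R$-estimate, at the cost of duplicating the induction of Proposition~\ref{strictglobdecr} in decorated form and adding the outer whiskering-by-$(f')^-$ step; the paper's route obtains the $3$-cell in one pass but pays with the two ad hoc multiset computations. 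Your treatment of the Peiffer case --- extracting the filling $3$-cell from $u'g\star_1(fv')^-\equiv f'\star_1(g')^-$ via the interchange identity $fv\star_1 u'g=ug\star_1 fv'$ and cancellation --- makes explicit what the paper's Step~1 only asserts, and your identified pressure points (strictness surviving whiskering, which is exactly compatibility with contexts) are the right ones; the only detail left unsaid is that the generating cell $D^{\psi}_{h,k}$ may realize the critical pair in the opposite order, in which case one takes its inverse, as the paper notes in passing.
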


\begin{proof}
We proceed in two steps.

\noindent {\bf Step 1.} We prove that, for every local branching $(f,g):u\dfl(v,w)$ of~$\Sigma$, there exists a confluence $(f',g'):(v,w)\dfl u'$ of~$\Sigma$ and a $3$-cell $A:f\star_1 f'\tfl g\star_1 g'$ in~$\tck{\Sr^{sd}(\Sigma,\psi)}$ such that the confluence diagram $(f\res f', g\res g')$ is strictly decreasing.

In the case of an aspherical branching, we can choose~$f'$ and~$g'$ to be identity $2$-cells, $A$ to be an identity $3$-cell and the confluence diagram $(f,f)$ is trivially strictly decreasing.

Suppose that $(f,g)$ is a Peiffer branching $(f_1v_1,u_1g_1): u_1v_1 \dfl (u_1'v_1,u_1v_1')
$. By hypothesis, the Peiffer confluence $(f_1v_1\res u'_1g_1, u_1g_1\res f_1v'_1)$ is equivalent to a strictly decreasing confluence diagram \linebreak $(f_1v_1\res f'_1, u_1g_1\res g_1')$. Hence, there exists a $3$-cell $A:f_1v_1\star_1 f'_1\tfl u_1g_1\star_1 g'_1$ in the $(3,1)$-category $\tck{\Sr^{sd}(\Sigma,\psi)}$.

If $(f,g)$ is an overlapping branching, we have $(f,g)=(whw',wkw')$ with~$(h,k)$ a critical branching. We consider the $3$-cell $D_{h,k}^{\psi}:h\star_1 h'\tfl k\star_1 k'$ of~$\Or(\Sigma,\psi)$ corresponding to the strict generating decreasing confluence of the critical branching~$(h,k)$ with respect to the labelling $\psi$, or its inverse.
Let us define the $2$-cells~$f'=wh'w'$ and~$g'=wk'w'$ and the $3$-cell~$A=wD_{h,k}^{\psi} w'$.
The labelling $\psi$ being compatible with contexts, the confluence diagram corresponding to the $3$-cell $A$ is strictly decreasing.

\medskip

\noindent {\bf Step 2.} Let $(f,g)$ be a $2$-sphere in $\Sigma_2^\ast$. This $2$-sphere defines a branching with source $s_1(f)=s_1(g)$.
The $2$-polygraph $\Sigma$ being strictly decreasing, we prove the lemma by well-founded induction on the measure $|(f,g)|$ of the branching $(f,g)$.
If $f$ or $g$ is an identity $2$-cell, say $g=1$, the $2$-cell $f$ is a $2$-loop. By Lemma \ref{Lemma:decompositionElementaryLoops}, there exists a $3$-cell $E : f \tfl 1_{s_1(f)}$ in the $(3,1)$-category $\tck{\Lr(\Sigma)}$.
Else, we have decompositions $f=f_1 \star_1 f_2$ and $g=g_1 \star_1 g_2$ in $\Sigma_2^\ast$ where $(f_1,g_1)$ is a local branching. Note that $f_2$ or $g_2$ can be equal to an identity $2$-cell.
The local branching $(f_1,g_1)$ is confluent by decreasingness. Moreover, by Step 1, there exists a $3$-cell $A: f_1\star_1 f'_1 \tfl g_1\star_1 g'_1$ in the $(3,1)$-category $\tck{\Sr^{sd}(\Sigma,\psi)}$, where the confluence diagram $(f_1\res f_1', g_1\res g_1')$ is strictly decreasing.

The branchings $(f'_1,f_2)$ is confluent by decreasingness. Moreover, the $2$-polygraph $\Sigma$  being strictly decreasing, by Lemma \ref{strictglobdecr}, there exist rewriting sequences $h$ and $k$ as indicated in the following diagram:
\[
\xymatrix@R=2em@C=3.6em{
& {u_1}
	\ar@2[dr] |-{f'_1}
	\ar@2 @/^5ex/[rrrd] ^-{f_2} _-{}="5"
        \ar@3 []!<-10pt,-20pt>;[dd]!<-10pt,20pt> ^*+{A}
&&
\\
{u}
	\ar@2 @/^2ex/ [ur] ^-{f_1}
	\ar@2 @/_2ex/ [dr] _-{g_1}
&& {u'}
	\ar@2[r] |-{h}
&& {\rep{u}}
    \ar@2[l] |-{k}
\\
& {v_1}
	\ar@2[ru] |-{g'_1}
	\ar@2 @/_5ex/[rrru] _-{g_2} ^-{}="8"
&&
\ar@3 "5"!<-9.55pt,-6.5pt>;"2,3"!<5pt,20pt> _*+{B}
\ar@3 "2,3"!<5pt,-20pt>;"8"!<-9.2pt,4.5pt> _*+{C}
}
\]
such that the confluence diagrams $(f'_1\cdot h, f_2\cdot k)$ is strictly decreasing.

Consider the multiset order $\precmult$ associated to the order $\prec$.
The confluence diagram $(f_1\res f'_1,g_1\res g'_1)$ being strictly decreasing, for any $k$ in $L^W(f'_1)$ and any $l$ in $L^W(g'_1)$, we have $k \prec \psi(f_1)$ and $l \prec \psi(g_1)$. Thus $|f_1 \cdot f'_1|=|f_1|$ and $|g_1 \cdot g'_1|=|g_1|$. This implies the following equality
\[
|(f_1,g_1)| = |(f,g)|.
\]
The confluence diagram $(f'_1\cdot h, f_2\cdot k)$ being strictly decreasing, by the same argument, we have
\[
|(f_1'\res h,f_2 \res k)|= |(f_1',f_2)|.
\]
Moreover, by Lemma \ref{Lemma:vanOostrom}, we have 
$|(f_1',f_2) | \precmult |(f,g_1)|$.
It follows that
\[
|(f_1'\res h,f_2 \res k)| \precmult |(f,g)|.
\]
By induction hypothesis, we deduce that there exists a $3$-cell $B : f_2\star_1 k \tfl f_1'\star_1 h$ in $\tck{\Sr^{sd}(\Sigma,\psi)}$.

Finally, let us prove that there exists a $3$-cell $C: g_1'\star_1 h \tfl g_2 \star_1 k$ in $\tck{\Sr^{sd}(\Sigma,\psi)}$. 
We have 
\[
|(g'_1\cdot h, g_2\cdot k)| = |g'_1| \cup |h^{(g'_1)}| \cup |g_2| \cup |k^{(g_2)}|.
\]
On the other hand, we have
\[
|(f,g)| = |f| \cup |g| = |f| \cup |g_1| \cup |g_2^{(g_1)}|.
\]
Furthermore, there exists a multiset $R$, possibly empty, such that $|g_2| = |g_2^{(g_1)}| \cup R$. 
Hence 
\[
|(g'_1\cdot h, g_2\cdot k)| =   |g_2^{(g_1)}| \cup X
\quad\text{and}\quad
|(f,g)| = |g_2^{(g_1)}| \cup Y.
\]
where $X = |g'_1| \cup |h^{(g'_1)}| \cup R \cup |k^{(g_2)}|$ and 
$Y = |f| \cup |g_1|$.
Moreover, we check that for every $i$ in $W$ such that $X(i)\neq 0$, there exists $j$ in $W$ such that $Y(j)\neq 0$ and $i \prec j$. 
Hence, we have 
\[
|(g'_1\cdot h, g_2\cdot k)| \precmult |(f,g)|.
\]
The existence of the $3$-cell $C$ follows by induction hypothesis.
In this way, we have constructed a $3$-cell in $\tck{\Sr^{sd}(\Sigma,\psi)}$ from $f$ to $g$ obtained by composition of the $3$-cells $A$, $B$ and $C$.
\end{proof}

\subsubsection{Example}
\label{Example:vOSCompletionSigmaB3+}
The $2$-polygraph $\sigmab$ given in~\ref{Example:DefinitionSigmaB3+} is strictly decreasing for the labelling QNF $\labQNF$ defined in~\ref{Example:QNFSigmaB3+}.
It has four confluent critical branchings. Thus, a strictly decreasing Squier's completion of the $2$-polygraph $\sigmab$ is given by the following $3$-cells:
\[
\xymatrix @R=0.6em @C=2.5em {
& tst^2
	\ar@2@/^/ [dr] ^-{\beta t}
\\
stst
	\ar@2@/^/ [ur] ^-{\alpha t}
	\ar@2@/_/ [dr] _-{s\beta}
&& stst
\\
& s^2ts
	\ar@2@/_/ [ur] _-{s\alpha}
\ar@3 "1,2";"3,2" ^-{D_{t\alpha ,\beta s}^{\labQNF}}
}
\qquad
\xymatrix @R=0.6em @C=2.5em {
& sts^2
\\
tsts
	\ar@2@/^/ [ur] ^-{\beta s}
	\ar@2@/_/ [dr] _-{t\alpha}
&& tsts
	\ar@2@/_/ [ul] _-{\beta s}
\\
& t^2st
	\ar@2@/_/ [ur] _-{t\beta}
\ar@3 "1,2";"3,2" ^-{D_{\beta s,t\alpha}^{\labQNF}}
}
\qquad
\xymatrix @R=0.6em @C=2.5em {
& tst^2s
	\ar@2@/^/ [dr] ^-{\beta ts}
\\
ststs
	\ar@2@/^/ [ur] ^-{\alpha ts}
	\ar@2@/_/ [dr] _-{st\alpha}
&& ststs
\\
& st^2st
	\ar@2@/_/ [ur] _-{st\beta}
\ar@3 "1,2";"3,2" ^-{D_{\alpha ts,st\alpha}^{\labQNF}}
}
\]
\[
\xymatrix @R=0.6em @C=2.5em {
& sts^2t
\\
tstst
	\ar@2@/^/ [ur] ^-{\beta st}
	\ar@2@/_/ [dr] _-{ts\beta}
&& tstst
	\ar@2@/_/ [ul] _-{\beta st}
\\
& ts^2ts
	\ar@2@/_/ [ur] _-{ts\alpha}
\ar@3 "1,2";"3,2" ^-{D_{\beta st,ts\beta}^{\labQNF}}
}
\qquad
\xymatrix @R=0.6em @C=2.5em {
&tst
           \ar@2@/^/ [dr] ^-{\beta}
\\
sts
	\ar@2@/^/ [ur] ^-{\alpha}
	\ar@2@/_2ex/ [rr] _-{1_{sts}} ^{}="but"
&& sts
\ar@3 "1,2";"but" ^-{E_{\alpha ,\beta}}
}
\]
where $D_{t\alpha ,\beta s}^{\labQNF}$, $D_{\beta s,t\alpha}^{\labQNF}$, $D_{\alpha ts,st\alpha}^{\labQNF}$ and $D_{\beta st,ts\alpha}^{\labQNF}$ are the generating decreasing confluences and $E_{\alpha\star_1\beta}$ is an elementary $2$-loop of $\Sigma$. Each of these confluences is decreasing because:
\[
\labQNF(\alpha t)=\labQNF(s\beta)=1
\quad\text{and}\quad
\labQNF(\beta t)=\labQNF(s\alpha)=0,
\]
\[
\labQNF(\beta s)=0,\;\labQNF(t\alpha)=2
\quad\text{and}\quad
\labQNF(t\beta)=1, \labQNF(\beta s)=0,
\]
\[
\labQNF(\alpha ts)=\labQNF(st\alpha)=1
\quad\text{and}\quad
\labQNF(\beta ts)=\labQNF(st\beta)=0,
\]
\[
\labQNF(\beta st)=0,\;\labQNF(ts\beta)=2
\quad\text{and}\quad
\labQNF(ts\alpha)=1, \labQNF(\beta st)=0.
\]

\subsection{Coherence by decreasingness}

The following theorem is the main result of this article.

\begin{theorem}
\label{Theorem:MainResult}
Let $(\Sigma,\psi)$ be a strictly decreasing $2$-polygraph.
Let $\Sr^{sd}(\Sigma,\psi)$ be a strictly decreasing Squier's completion of $\Sigma$.
If $\psi$ is compatible with contexts and $(\Sigma,\psi)$ is Peiffer decreasing with respect to the extension $\Sr^{sd}(\Sigma,\psi)$, then $\Sr^{sd}(\Sigma,\psi)$ is a coherent presentation of the category presented by $\Sigma$.
\end{theorem}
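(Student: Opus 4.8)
The plan is to unwind the definition of coherent presentation and reduce the statement to Lemma~\ref{Lemma1}. Since the underlying $2$-polygraph $\Sigma$ is assumed to present the category $\C$, it remains only to show that the globular extension $\Gamma=\Or(\Sigma,\psi)\cup\Lr(\Sigma)$ underlying $\Sr^{sd}(\Sigma,\psi)$ is a homotopy basis of the $(2,1)$-category $\tck{\Sigma}_2$, i.e. that every $2$-sphere $(f,g)$ of $\tck{\Sigma}_2$ can be filled by a $3$-cell of $\tck{\Sr^{sd}(\Sigma,\psi)}$. The hypotheses of strict decreasingness, compatibility with contexts, and Peiffer decreasingness with respect to the completion are exactly those of Lemma~\ref{Lemma1}, which already settles this for $2$-spheres lying in the free $2$-category $\Sigma_2^\ast$, i.e. for spheres both of whose sides are composites of rewriting steps without inverses. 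The entire additional content of the theorem is therefore the passage from positive spheres in $\Sigma_2^\ast$ to arbitrary spheres in $\tck{\Sigma}_2$.

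First I would record that $\Sigma$ is confluent: a strictly decreasing $2$-polygraph is in particular decreasing, hence confluent as noted after the definition of decreasingness. Confluence yields the Church--Rosser property, that any two congruent $1$-cells admit a common reduct reachable by positive rewriting sequences. The strategy is then to straighten an arbitrary zig-zag into a single ``valley''. Concretely, I would prove by induction on the number of backward (inverse) steps in a fixed decomposition that every $2$-cell $h:u\dfl v$ of $\tck{\Sigma}_2$ satisfies $h\equiv_\Gamma p\star_1 q^-$ for some positive $2$-cells $p:u\dfl z$ and $q:v\dfl z$ in $\Sigma_2^\ast$. In the inductive step one writes $h=h'\star_1 e^-\star_1 h''$ with $e^-$ the rightmost backward step and $h''$ positive; applying the induction hypothesis to $h'$ and using $(q')^-\star_1 e^-=(e\star_1 q')^-$ to absorb $e$ into the resulting positive leg turns $h$ into a shape ``positive, then one backward chunk, then positive''. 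The junction of the two positive legs is a local branching, which confluence resolves into a positive sphere of $\Sigma_2^\ast$; Lemma~\ref{Lemma1} fills that sphere, and the filling converts the remaining backward chunk into a valley.

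With this normal form in hand, the conclusion follows by one more use of confluence and of Lemma~\ref{Lemma1}. Given a $2$-sphere $(f,g):u\dfl v$, write $f\equiv_\Gamma p_f\star_1 q_f^-$ and $g\equiv_\Gamma p_g\star_1 q_g^-$ as above, with $p_f:u\dfl z_f$, $p_g:u\dfl z_g$, $q_f:v\dfl z_f$ and $q_g:v\dfl z_g$. Since $p_f$ and $p_g$ share the source $u$, confluence gives a common reduct $z$ of $z_f$ and $z_g$ via positive legs $a:z_f\dfl z$ and $b:z_g\dfl z$. The two positive spheres $(p_f\star_1 a,\,p_g\star_1 b)$ and $(q_f\star_1 a,\,q_g\star_1 b)$ are filled by Lemma~\ref{Lemma1}, and inserting $a\star_1 a^-$ and $b\star_1 b^-$ then using these fillings yields $f\equiv_\Gamma (p_f\star_1 a)\star_1(q_f\star_1 a)^-\equiv_\Gamma (p_g\star_1 b)\star_1(q_g\star_1 b)^-\equiv_\Gamma g$. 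Hence every sphere of $\tck{\Sigma}_2$ is fillable and $\Sr^{sd}(\Sigma,\psi)$ is a coherent presentation.

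The main obstacle I anticipate is precisely the lack of termination: in the classical convergent setting one straightens a zig-zag by rewriting each vertex to its unique normal form and invoking Noetherian induction, which is unavailable here. The device that replaces it is to induct on the number of inverse steps rather than on the $1$-cells, so that the delicate, non-Noetherian work stays confined to Lemma~\ref{Lemma1} on positive spheres, while the outer argument is a purely structural reduction powered by confluence. One should also note that the degenerate positive spheres arising when a side contracts to an identity are $2$-loops, and these are precisely the cells killed by $\Lr(\Sigma)$ through Lemma~\ref{Lemma:decompositionElementaryLoops}, which is already absorbed into Lemma~\ref{Lemma1}.
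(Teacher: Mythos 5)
Your proposal is correct and takes essentially the same route as the paper: the paper likewise decomposes $f\star_1 g^-$ into a zigzag of positive $2$-cells of $\Sigma_2^\ast$, uses confluence to complete all the intermediate vertices into positive $2$-spheres with a common $1$-target, and fills each such sphere by Lemma~\ref{Lemma1}, composing the resulting $3$-cells. Your valley normal form $h\equiv_\Gamma p\star_1 q^-$, obtained by induction on the number of inverse steps, is merely a sequential reorganization of the paper's simultaneous tiling of the zigzag, and the remainder of the two arguments coincides.
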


\begin{proof}
Let $(f,g)$ be a $2$-sphere of the $(2,1)$-category $\tck{\Sigma}_2$. 
By definition of $\tck{\Sigma}_2$, the $2$-cell $f\star_1 g^-$ can be decomposed into a zigzag
\[
\xymatrix@C=4em @R=2em{
&
&
  \ar@2 [l] _-{f_1}
  \ar@2 [r] ^-{f_2}
&
\qquad\cdots\qquad
&
  \ar@2 [l] _-{f_{k-2}}
  \ar@2 [r] ^-{f_{k-1}}
&
\\
  \ar@2 @/_/ [dr] _-{g_0}
  \ar@2 @/^/ [ur] ^-{f_0}
&&&&&&
  \ar@2 @/^/  [dl] ^-{g_l}
  \ar@2 @/_/  [ul] _-{f_k}
\\
&
&
  \ar@2 [l] ^-{g_1}
  \ar@2 [r] _-{g_2}
&
\qquad\cdots\qquad
&
  \ar@2 [l] ^-{g_{l-2}}
  \ar@2 [r] _-{g_{l-1}}
&
}
\]
where the $2$-cells $f_0,\ldots, f_k$ and $g_0,\ldots,g_l$ are $2$-cells of the $2$-category $\Sigma_2^\ast$. Note that some of those $2$-cells can be identities.
By confluence of the $2$-polygraph $\Sigma$, there exist families of $2$-spheres of $\Sigma_2^\ast$ 
\[
\xymatrix @R=0.9em @C=3em {
& 
    \ar@2@/^/ [dr] ^-{f'_i}
\\
	\ar@2@/^/ [ur] ^-{f_i}
	\ar@2@/_/ [dr] _-{f_{i-1}}
&&
\\
& 
  \ar@2@/_/ [ur] _-{f'_{i-1}}
}
\qquad
\xymatrix @R=0.9em @C=3em {
& 
    \ar@2@/^/ [dr] ^-{g'_j}
\\
	\ar@2@/^/ [ur] ^-{g_j}
	\ar@2@/_/ [dr] _-{g_{j-1}}
&&
\\
& 
  \ar@2@/_/ [ur] _-{g'_{j-1}}
}
\qquad
\xymatrix @R=0.9em @C=3em {
& 
    \ar@2@/^/ [dr] ^-{f'_1}
\\
	\ar@2@/^/ [ur] ^-{f_0}
	\ar@2@/_/ [dr] _-{g_0}
&&
\\
& 
  \ar@2@/_/ [ur] _-{g'_1}
}
\qquad
\xymatrix @R=0.9em @C=3em {
& 
    \ar@2@/^/ [dr] ^-{f'_{k-1}}
\\
	\ar@2@/^/ [ur] ^-{f_k}
	\ar@2@/_/ [dr] _-{g_l}
&&
\\
& 
  \ar@2@/_/ [ur] _-{g'_{l-1}}
}
\]
with same $1$-target, for all $2\leq i \leq k-1$ and $2\leq j \leq l-1$. Note that some of these $2$-spheres can be trivial. Then the $2$-sphere $(f,g)$ can be filled up by these $2$-spheres as follows:
\[
\xymatrix@C=4em @R=2.25em{
&
  \ar@2 [drr] _(0.4){f'_1}
&
  \ar@2 [l] _-{f_1}
  \ar@2 [r] ^-{f_2}
&
\qquad\quad\cdots\quad\qquad
  \ar@2 "1,4"!<-45pt,0pt>;"2,4"!<0pt,0pt> ^(0.6){f'_2}
  \ar@2 "1,4"!<+45pt,0pt>;"2,4"!<0pt,0pt> _(0.6){f'_{k-2}}
&
  \ar@2 [l] _-{f_{k-2}}
  \ar@2 [r] ^-{f_{k-1}}
&
  \ar@2 [dll] ^(0.4){f'_{k-1}}
\\
  \ar@2 @/_/ [dr] _-{g_0}
  \ar@2 @/^/ [ur] ^-{f_0}
&&&&&&
  \ar@2 @/^/  [dl] ^-{g_l}
  \ar@2 @/_/  [ul] _-{f_k}
\\
&
  \ar@2 [urr] ^(0.4){g'_1}
&
  \ar@2 [l] ^-{g_1}
  \ar@2 [r] _-{g_2}
&
\qquad\quad\cdots\quad\qquad
  \ar@2 "3,4"!<-45pt,0pt>;"2,4"!<0pt,0pt> _(0.6){g'_2}
  \ar@2 "3,4"!<+45pt,0pt>;"2,4"!<0pt,0pt> ^(0.6){g'_{l-2}}
&
  \ar@2 [l] ^-{g_{l-2}}
  \ar@2 [r] _-{g_{l-1}}
&
  \ar@2 [ull] _(0.4){g'_{l-1}}
}
\]
By Lemma \ref{Lemma1}, these $2$-spheres can be filled up by $3$-cells of the $(3,1)$-category $\tck{\Sr^{sd}(\Sigma,\psi)}$. Finally, the composition of these $3$-cells gives
a 3-cell of $\tck{\Sr^{sd}(\Sigma,\psi)}$ from $f$ to~$g$.
\end{proof}

Strict decreasingness is a required condition in Theorem \ref{Theorem:MainResult} as shown by the following example.

\subsubsection{Example}
Consider the $2$-polygraph $\Sigma$ without $2$-loop and containing two families $(f^i_j)_{i, j \in \mathbb{N}, ij=0}$ and $(g^i_j)_{i, j \in \mathbb{N}, ij=0}$ of $2$-cells satisfying the following conditions:
\begin{enumerate}[{\bf i)}]
\item the sequences $(f^0_n)_{n \in \mathbb{N}}$, $(f^n_0)_{n \in \mathbb{N}}$, $(g^0_n)_{n \in \mathbb{N}}$ and $(g^n_0)_{n \in \mathbb{N}}$ are infinite rewriting paths,
\item for any odd integer $n$, we have $t_1(f^n_0)=t_1(g^n_0)$ and $t_1(f^0_n)=t_1(g^0_n)$,
\item for any even integer $n$, we have $t_1(f^n_0)=t_1(f^0_n)$ and $t_1(g^n_0)=t_1(g^0_n)$,
\end{enumerate}
as indicated in the following diagram
\[
\scalebox{0.85}{
\xymatrix@C=2.5em @R=2.5em{
&&&&&
\\
&&&&&
\\
\quad
&
  \ar@2 @/^5ex/ [uurrr] ^-{f_0^0}
  \ar@2 @/_5ex/ [ddrrr] _-{g_0^0}
&
\quad
  \ar@{<-}@2 @/^3ex/ [uurr] ^(0.3){f_0^1}
  \ar@{<-}@2 @/_3ex/ [ddrr] _(0.3){g_0^1}
  \ar@2  @/^2ex/ [urr] ^(0.7){f_0^2}
  \ar@2 @/_2ex/ [drr] _(0.7){g_0^2}
&
\quad
  \ar@{<-}@2 @/^1ex/ [ur] ^(0.23){f_0^3}
  \ar@{<-}@2 @/_1ex/ [dr] _(0.23){g_0^3}
&
\cdots
&
\quad
  \ar@{<-}@2 @/_1ex/ [ul] _(0.23){f_3^0}
  \ar@{<-}@2 @/^1ex/ [dl] ^(0.23){g_3^0}
&
\quad
  \ar@{<-}@2 @/_3ex/ [uull] _-{f_1^0}
  \ar@{<-}@2 @/^3ex/ [ddll] ^-{g_1^0}
  \ar@2 @/_2ex/ [ull] _(0.7){f_2^0}
  \ar@2 @/^2ex/ [dll] ^(0.7){g_2^0}
\\
&&&&&
\\
&&&&&
}
}
\]
and such that the only critical branchings of $\Sigma$ are of one of the following forms:
\[
(f^n_0,g^n_0), \; (f^0_n,g^0_n),\; \text{for $n$ even, and \,}
(f^0_n,f^n_0), \; (g^0_n,g^n_0),\; \text{for $n$ odd.}
\]
Let us consider the globular extension $\Gamma$ of the free $(2,1)$-polygraph $\tck{\Sigma}_2$, defined by the following infinite family of $2$-spheres: 
\[
(f^n_0 \star_1 f^{n+1}_0,f^0_n \star_1 f^0_{n+1})
\quad \text{and}\quad 
(g^n_0 \star_1 g^{n+1}_0,g^0_n \star_1 g^0_{n+1})
\quad \text{for $n$ odd,}
\]
and
\[
(f^n_0 \star_1 f^{n+1}_0,g^n_0 \star_1 g^{n+1}_0)
\quad \text{and} \quad
(f^0_n \star_1 f^0_{n+1},g^0_n \star_1 g^0_{n+1})
\quad \text{for $n$ even.}
\] 
The globular extension $\Gamma$ contains one generating confluence for each critical branching of $\Sigma$. However, we cannot define a $3$-cell in the free $(3,1)$-category generated by $(\Sigma,\Gamma)$ with $2$-source  $f^0_0 \star_1 f^0_1$ and \linebreak $2$-target $g^0_0 \star_1 g^0_1$. As a consequence, $\Gamma$ does not form a homotopy basis of the $(2,1)$-category $\tck{\Sigma}_2$.
In fact, we note that the $2$-polygraph $\Sigma$ is not strictly decreasing, because no labelling of $\Sigma$ is well-founded, but decreasing with the singleton labelling.

Following \ref{Subsubsection:DecreasingnessFromQuasiTermination}, any quasi-convergent $2$-polygraph $\Sigma$ is strictly decreasing with respect to any quasi-normal form labelling $\labQNF$. The following result is a consequence of Theorem \ref{Theorem:MainResult}.

\begin{corollary}
Let $\Sigma$ be a quasi-convergent $2$-polygraph and let $\labQNF$ be a quasi-normal form labelling of $\Sigma$.
Let $\Sr^{sd}(\Sigma,\labQNF)$ be a strictly decreasing Squier's completion of $\Sigma$.
If the labelling $\labQNF$ is compatible with contexts and $(\Sigma,\labQNF)$ is Peiffer decreasing with respect to the extension $\Sr^{sd}(\Sigma,\labQNF)$, then $\Sr^{sd}(\Sigma,\labQNF)$ is a coherent presentation of the category presented by $\Sigma$.
\end{corollary}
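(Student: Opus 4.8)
The plan is to reduce the statement directly to Theorem~\ref{Theorem:MainResult}, applied with $\psi=\labQNF$. That theorem requires four conditions on the labelled $2$-polygraph $(\Sigma,\labQNF)$ and its Squier's completion: that $(\Sigma,\labQNF)$ be strictly decreasing, that $\labQNF$ be compatible with contexts, that $(\Sigma,\labQNF)$ be Peiffer decreasing with respect to $\Sr^{sd}(\Sigma,\labQNF)$, and that $\Sr^{sd}(\Sigma,\labQNF)$ be a strictly decreasing Squier's completion of $\Sigma$. The last three are precisely hypotheses of the corollary (or part of the data it posits), so the only condition left to verify is strict decreasingness of $(\Sigma,\labQNF)$.

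First I would establish strict decreasingness by invoking the argument of~\ref{Subsubsection:DecreasingnessFromQuasiTermination}. Since $\Sigma$ is quasi-convergent, quasi-termination guarantees that every $1$-cell admits a quasi-normal form, and confluence guarantees that any local branching $u\dfl(v,w)$ can be completed to a common quasi-normal form $\widetilde{u}$. Choosing the two closing rewriting sequences to be of minimal length makes the resulting confluence diagram strictly decreasing for the labelling $\labQNF$, which is exactly what is needed. Hence $(\Sigma,\labQNF)$ is strictly decreasing.

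With strict decreasingness in hand, all the hypotheses of Theorem~\ref{Theorem:MainResult} hold for $(\Sigma,\labQNF)$ and $\Sr^{sd}(\Sigma,\labQNF)$. Applying the theorem then yields that $\Sr^{sd}(\Sigma,\labQNF)$ is a homotopy basis of $\tck{\Sigma}_2$, and therefore a coherent presentation of the category presented by $\Sigma$, which is the assertion of the corollary.

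I do not expect any genuine obstacle, since the entire weight of the argument is carried by Theorem~\ref{Theorem:MainResult} and by the observation of~\ref{Subsubsection:DecreasingnessFromQuasiTermination}. The point worth stressing is that compatibility with contexts and Peiffer decreasingness are essential extra assumptions rather than automatic properties of a QNF labelling: the examples around~\ref{Example:a=b} exhibit QNF labellings failing each of these, so they must be retained as hypotheses and cannot be deduced from quasi-convergence alone.
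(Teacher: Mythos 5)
Your proposal is correct and matches the paper exactly: the paper states this corollary as an immediate consequence of Theorem~\ref{Theorem:MainResult}, with strict decreasingness of $(\Sigma,\labQNF)$ supplied by the observation of~\ref{Subsubsection:DecreasingnessFromQuasiTermination} (closing each local branching to the common quasi-normal form by minimal-length sequences). Your closing remark that compatibility with contexts and Peiffer decreasingness are genuine hypotheses, not consequences of quasi-convergence, is also consistent with the paper's examples.
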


\subsubsection{Example}
\label{Example:homotopyBasisSigmaB3+}
By Theorem \ref{Theorem:MainResult}, the five $3$-cells given in 
\ref{Example:vOSCompletionSigmaB3+} form a homotopy basis of the \linebreak $2$-polygraph $\sigmab$. Indeed, the $2$-polygraph $\sigmab$ is strictly decreasing for the labelling QNF defined in Example~\ref{Example:QNFSigmaB3+}. This labelling being compatible with contexts, the only remaining point concerns the Peiffer confluences. Let us show that any Peiffer confluence is equivalent to a decreasing confluence diagram.
Consider a Peiffer branching $(f v, u g) : uv \dfl (u'v,uv')$ of $\sigmab$ and its Peiffer confluence $(u'g,fv') : (u'v,uv') \dfl u'v'$:
\[
\xymatrix @R=0.15em @C=2.5em {
& u'v
	\ar@2@/^/ [dr] ^{u' g}
\\
uv
	\ar@2@/^/ [ur] ^-{fv}
	\ar@2@/_/ [dr] _-{ug}
&& u'v'
\\
& uv'
	\ar@2@/_/ [ur] _-{fv'}
}
\]
By definition of $\sigmab$, there exist rewriting steps $f':u'\dfl u$ and $g':v'\dfl v$. It follows that this Peiffer confluence is equivalent with respect to $\Lr(\sigmab)$ to each of the following Peiffer confluence:
\[
(f'v'\cdot u g', u'g'\cdot f'v),
\qquad
(u'g\cdot f'v',f'v\cdot ug),
\qquad
(ug'\cdot fv,fv'\cdot u'g').
\]
The equivalences are proved by the following diagrams:
\[
\xymatrix @R=1.9em @C=3em {
& uv'
	\ar@2@/^/ [dr] ^{u g'}
	\ar@2@/^/ [dl] ^-{fv'}
\\
u'v'
	\ar@2@/^/ [ur] ^-{f'v'}
	\ar@2@/_/ [dr] _-{u'g'}
&& uv
	\ar@2@/^/ [ul] ^-{ug}
	\ar@2@/_/ [dl] _-{fv}
\\
& u'v
	\ar@2@/_/ [ur] _-{f'v}
	\ar@2@/_/ [ul] _-{u'g}
}
\qquad
\xymatrix @R=1.5em @C=3em  {
& u'v'
	\ar@2@/^/ [dr] ^{f'v'}
\\
u'v
	\ar@2@/^/ [ur] ^-{u'g}
	\ar@2@/_/ [dr] _-{f'v}
&& 
uv'
  \ar@2@/^/ [ul] ^-{fv'}
\\
& uv
	\ar@2@/_/ [ur] _-{ug}
	\ar@2@/_/ [ul] _-{fv}
}
\qquad
\xymatrix @R=1.5em @C=3em  {
& uv
	\ar@2@/^/ [dr] ^{fv}
	 \ar@2@/^/ [dl] ^-{ug}
\\
uv'
	\ar@2@/^/ [ur] ^-{ug'}
	\ar@2@/_/ [dr] _-{fv'}
&& 
u'v
  \ar@2@/_/ [dl] _-{u'g}
\\
& u'v'
	\ar@2@/_/ [ur] _-{u'g'}
}
\]
Finally, in each family of such four Peiffer confluences, one of them is decreasing with respect to the labelling $\labQNF$.

\subsubsection{Decreasingness from termination}
Given a confluent and terminating $2$-polygraph $\Sigma$, any \linebreak $1$-cell $u$ of $\Sigma_1^\ast$ has a unique normal form denoted by $\rep{u}$.
We define the \emph{labelling to the normal form} $\labNF:\etapes{\Sigma} \fl \Sigma_1^\ast$ by setting for each rewriting step $f$, $\labNF(f) = t_1(f)$. We choose on $\Sigma_1^\ast$ the order induced by the rewrite relation defined by $\Sigma_2$. This labelling is compatible with contexts and makes the $2$-polygraph $\Sigma$ strictly decreasing and Peiffer decreasing. Moreover, $\Sigma$ being terminating it does not have loop and in particular the decreasing Squier completion coincides with the Squier completion.  
In this way, the Squier coherence theorem obtained for convergent string rewriting systems in \cite{Squier94} is a consequence of Theorem~\ref{Theorem:MainResult}:

\begin{corollary}[{\cite[Theorem 5.2]{Squier94}}]
Let $\Sigma$ be a convergent $2$-polygraph. Any Squier's completion~$\Sr(\Sigma)$ of $\Sigma$ is a coherent presentation of the category presented by $\Sigma$.
\end{corollary}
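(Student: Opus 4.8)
The plan is to obtain the corollary as a direct instance of Theorem~\ref{Theorem:MainResult}, by checking that the labelling to the normal form $\labNF$ meets all of its hypotheses and that, under termination, a Squier's decreasing completion collapses to an ordinary Squier's completion. First I would fix on $\Sigma_1^\ast$ the order $\prec$ induced by the rewrite relation, so that $v\prec u$ exactly when $u$ rewrites into $v$ by a nonempty rewriting sequence. Termination of $\Sigma$ is precisely the assertion that there is no infinite descending chain for this relation, and it also rules out $u\prec u$; hence $\prec$ is a well-founded order and $(\Sigma,\labNF)$ is a well-founded labelled $2$-polygraph.

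Next I would verify the two decreasingness-type hypotheses. For strict decreasingness, take a local branching $(f,g):u\dfl(v,w)$; by confluence it is completed by some confluence $(f',g'):(v,w)\dfl u'$. Writing $f'=f'_1\res\cdots\res f'_p$, each intermediate target $t_1(f'_i)$ is reached from $v$ by the nonempty sequence $f'_1\res\cdots\res f'_i$, so $\labNF(f'_i)=t_1(f'_i)\prec v=\labNF(f)$, and symmetrically for $g'$. Thus \emph{every} completion of a local branching is automatically strictly decreasing for $\labNF$. Compatibility with contexts then follows from $\star_0$-compatibility: since the rewrite relation is stable under left and right multiplication by $1$-cells, the relation $t_1(f'_i)\prec t_1(f)$ forces $u_1t_1(f'_i)u_2\prec u_1t_1(f)u_2$ for all composable $u_1,u_2$, so $\labNF$ is $\star_0$-compatible; and, as established in the discussion of $\star_0$-compatibility, any $\star_0$-compatible labelling on a strictly decreasing $2$-polygraph is compatible with contexts.

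For Peiffer decreasingness I would take the Peiffer confluence itself as the completing diagram. Given a Peiffer branching $(fv,ug):uv\dfl(u'v,uv')$ with Peiffer confluence $(u'g,fv'):(u'v,uv')\dfl u'v'$, the unique label of $u'g$ is $u'v'$, and since $v$ rewrites to $v'$ we get $u'v'\prec u'v=\labNF(fv)$; symmetrically, since $u$ rewrites to $u'$, we get $u'v'\prec uv'=\labNF(ug)$. Hence the confluence diagram $(fv\res u'g,ug\res fv')$ is already strictly decreasing, and the required equivalence $u'g\star_1(fv')^-\equiv_\Gamma f'\star_1(g')^-$ holds trivially with $f'=u'g$ and $g'=fv'$. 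Therefore $(\Sigma,\labNF)$ is Peiffer decreasing with respect to any completion $\Sr^{sd}(\Sigma,\labNF)$ containing these Peiffer confluences.

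Finally I would close the gap between the two notions of completion. Termination forbids any nonidentity $2$-loop, since a loop $f$ with $s_1(f)=t_1(f)=u$ would produce the infinite sequence $u\fl u\fl\cdots$; hence $\mathcal{E}(\Sigma)$ is empty, the loop extension $\Lr(\Sigma)$ is trivial, and $\Sr^{sd}(\Sigma,\labNF)=\Or(\Sigma,\labNF)$ reduces to a family of generating confluences. Combining this with the strict-decreasingness verification, any family of generating confluences of the convergent $2$-polygraph $\Sigma$ is a family of generating strictly decreasing confluences for $\labNF$, so an arbitrary Squier's completion $\Sr(\Sigma)$ is a strictly decreasing Squier's completion in the sense of Theorem~\ref{Theorem:MainResult}; invoking that theorem yields that $\Sr(\Sigma)$ is a coherent presentation. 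I expect the only delicate point to be justifying the universal quantifier ``any Squier's completion'': the argument rests on the observation that, under $\labNF$, strict decreasingness of a completion is automatic, so that no special choice of confluence diagram is required.
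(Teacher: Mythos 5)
Your proposal is correct and follows essentially the same route as the paper: the paragraph preceding the corollary (``Decreasingness from termination'') also takes the labelling $\labNF(f)=t_1(f)$ with the order induced by the rewrite relation, notes that it is compatible with contexts, strictly decreasing and Peiffer decreasing, observes that termination excludes $2$-loops so that the decreasing Squier completion coincides with the ordinary one, and then invokes Theorem~\ref{Theorem:MainResult}. Your write-up merely fills in the verifications the paper leaves implicit (well-foundedness, $\star_0$-compatibility, strict decreasingness of the Peiffer confluences, and the fact that under $\labNF$ every completion is automatically strictly decreasing, which justifies the quantifier ``any Squier's completion''), all of which are sound.
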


\subsection{Finiteness homotopical and homological conditions by decreasingness}

\subsubsection{Finite derivation type}
A $2$-polygraph $\Sigma$ has \emph{finite derivation type}, FDT for short, if the free $(2,1)$-category $\tck{\Sigma}_2$ has a finite homotopy basis, see {\cite[Section 4]{GuiraudMalbos16}}. Squier proved that this property is invariant for finite string rewriting systems: if $\Sigma$ and $\Upsilon$ are two finite $2$-polygraphs, then $\Sigma$ has FDT if and only if $\Upsilon$ has FDT. As a consequence, the property can be defined on finitely presented monoids: a finitely presented monoid has FDT if it has a presentation by a $2$-polygraph that has FDT.

For a convergent $2$-polygraph~$\Sigma$, its is well known that a family of generating confluences forms a homotopy basis of $\tck{\Sigma}_2$. A finite convergent $2$-polygraph having a finite number of critical branchings, then it has FDT. However, a finite decreasing $2$-polygraph can have an infinite decreasing Squier's completion. Indeed, the set of decreasing confluences is always finite for a finite $2$-polygraph but the set of elementary $2$-loops may be infinite. 
As a consequence of Theorem~\ref{Theorem:MainResult} we can formulate the following result.

\begin{proposition}
\label{Proposition:TDFdecreasing}
Let $(\Sigma,\psi)$ be a strictly decreasing and quasi-convergent $2$-polygraph such that the labelling $\psi$ is compatible with contexts and Peiffer decreasing. If $\Sigma$ has a finite set of $2$-cells and a finite set of elementary $2$-loops, then it has finite derivation type.
\end{proposition}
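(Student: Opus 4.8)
The plan is to exhibit an explicit finite homotopy basis of $\tck{\Sigma}_2$ by invoking the main theorem and then counting the $3$-cells it supplies. First I would fix a strictly decreasing Squier's completion $\Sr^{sd}(\Sigma,\psi)$ of $\Sigma$. Since $\psi$ is strictly decreasing, compatible with contexts and Peiffer decreasing with respect to $\Sr^{sd}(\Sigma,\psi)$, Theorem~\ref{Theorem:MainResult} applies and yields that $\Sr^{sd}(\Sigma,\psi)$ is a coherent presentation of the category presented by $\Sigma$; that is, its globular extension $\Or(\Sigma,\psi) \cup \Lr(\Sigma)$ is a homotopy basis of the $(2,1)$-category $\tck{\Sigma}_2$.

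It then remains to check that this homotopy basis is finite, which reduces to bounding its two parts separately. The loop part $\Lr(\Sigma)$ contains exactly one $3$-cell $A_\alpha$ for each equivalence class in $\mathcal{E}(\Sigma)$, so it is finite precisely because $\Sigma$ has a finite set of elementary $2$-loops by hypothesis. The confluence part $\Or(\Sigma,\psi)$ contains one generating decreasing confluence $D_{f,g}^\psi$ per critical branching of $\Sigma$, so it suffices to argue that a $2$-polygraph with a finite set of $2$-cells has only finitely many critical branchings.

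For this last point I would recall that, up to the context order $\sqsubseteq$, every critical branching is determined by a proper overlap of the sources $s_1(\alpha)$ and $s_1(\beta)$ of two $2$-cells $\alpha, \beta$ in $\Sigma_2$. As $\Sigma_2$ is finite and each such source is a word of finite length in $\Sigma_1^\ast$, only finitely many such overlaps occur, whence finitely many critical branchings and a finite family $\Or(\Sigma,\psi)$. Combining the two bounds, the homotopy basis $\Or(\Sigma,\psi) \cup \Lr(\Sigma)$ is finite, so $\tck{\Sigma}_2$ admits a finite homotopy basis and $\Sigma$ has finite derivation type. I do not expect a genuine obstacle here: the substantive content is entirely delivered by Theorem~\ref{Theorem:MainResult}, and the remaining work is finiteness bookkeeping. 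The only step deserving explicit mention is the classical fact that finiteness of $\Sigma_2$ forces finiteness of the set of critical branchings, which is exactly what makes $\Or(\Sigma,\psi)$ finite, whereas the finiteness of the set of elementary $2$-loops must be assumed separately since it may fail for a finite $2$-polygraph.
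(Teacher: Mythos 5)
Your proof is correct and follows essentially the same route as the paper, which derives the proposition directly from Theorem~\ref{Theorem:MainResult} together with the observations that a finite $2$-polygraph has only finitely many critical branchings (so $\Or(\Sigma,\psi)$ is finite) while the loop extension $\Lr(\Sigma)$ is finite exactly by the separate hypothesis on elementary $2$-loops. Your closing remark that finiteness of the elementary $2$-loops must be assumed and can fail for finite $2$-polygraphs matches the paper's own discussion preceding the proposition.
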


\begin{example}
\label{Example:notStable2}
Let us consider the $2$-polygraph $\Sigma$ with only one $0$-cell, $\Sigma_1=\{a,b,c,d,d'\}$ and $\Sigma_2=\{ab \dfl a, ac \dfl da, da \dfl d'a, d'a \dfl ac\}$.
This $2$-polygraph presents a monoid which has not FDT, see {\cite[Section 5]{Lafont95}}. Moreover, it has only one elementary $2$-loop up to equivalence and a finite number of critical branchings. As a consequence, there is no well-founded labelling compatible with contexts  making the $2$-polygraph $\Sigma$ strictly decreasing and Peiffer decreasing.
\end{example}

\subsubsection{Finite homological type $FP_3$ by decreasingness}
As a final remark, let us mention another application to computation of low-dimensional homological properties of monoids.
Let $\M$ be a monoid and $\Sigma$ be a coherent presentation of $\M$.
Following {\cite[Proposition 5.3.2.]{GuiraudMalbos16}}, there is a partial resolution 
\[
\mathbb{Z}\M[\Sigma_3] \ofl{d_3} \mathbb{Z}\M[\Sigma_2] \ofl{d_2} \mathbb{Z}\M[\Sigma_1] \ofl{d_1} \mathbb{Z}\M \ofl{\epsilon} \mathbb{Z} \longrightarrow 0
\]
of left-modules over the free ring $\mathbb{Z}\M$ over $\M$, where 
$\mathbb{Z}$ denotes the trivial $\mathbb{Z}\M$-module and $\mathbb{Z}\M[\Sigma_i]$ denotes the free $\mathbb{Z}\M$-module generated by~$\Sigma_i$. The morphisms of $\mathbb{Z}\M$-modules are defined by $\epsilon(u)=1$, for any $u$ in~$\M$, and
$d_1$, $d_2$ and $d_3$ are defined on the generators by
\[
d_1(x) = x - 1,
\qquad
d_2(\alpha)=[s_1(\alpha)]-[t_1(\alpha)],
\qquad
d_3(A)=[s_2(A)]-[t_2(A)],
\]
for any $x$ in $\Sigma_1$, $\alpha$ in $\Sigma_2$ and $A$ in $\Sigma_3$ , and with the bracket notations of {\cite[Section 5]{GuiraudMalbos16}}.

In particular, by Theorem~\ref{Theorem:MainResult}, if $(\Sigma,\psi)$ is a strictly decreasing $2$-polygraph such that $\psi$ is compatible with contexts  and Peiffer decreasing, the coherent presentation given by the strictly decreasing Squier completion $\Sr^{sd}(\Sigma,\psi)$ induces such a partial resolution. If moreover $\Sigma$ has a finite set of $2$-cells and a finite set of elementary $2$-loops, then it has finite homological type~$FP_3$. We expect that our construction can be extended in higher-dimension of homology producing infinite lenght resolutions for monoids presented by quasi-convergent presentations, and thus weakening the termination hypothesis required in construction of such resolutions as in \cite{Kobayashi90, Anick86}.

\subsubsection{Example}
Following Example \ref{Example:homotopyBasisSigmaB3+}, the monoid $\B_3^+$, admits a coherent presentation with two 1-cells $s$ and $t$, two $2$-cells $\alpha : sts \dfl tst$ and $\beta : tst \dfl sts$ and the five $3$-cells $D_{t\alpha ,\beta s}^{\labQNF}$, $D_{\beta s,t\alpha}^{\labQNF}$, $D_{\alpha ts,st\alpha}^{\labQNF}$, $D_{\beta st,ts\beta}^{\labQNF}$, $E_{\alpha ,\beta}$.
Using the homotopical reduction procedure introduced in {\cite[2.3.1.]{GaussentGuiraudMalbos15}} with a collapsible part made of the $3$-cell $E_{\alpha,\beta}$, we can reduce this coherent presentation to a coherent presentation of the monoid $\B_3^+$ with the same $k$-cells for $k\leq 2$ and with no $3$-cells.
Hence, we obtain the following resolution 
\[
0 \fll \mathbb{Z}\M[\alpha,\beta] \ofl{d_2} \mathbb{Z}\M[s,t] \ofl{d_1} \mathbb{Z}\M \ofl{\epsilon} \mathbb{Z} \longrightarrow 0.
\]
We deduce the homology of the monoid $\B_3^+$ with integral coefficients: $\mathrm{H}_n(\M,\mathbb{Z})~=~\mathbb{Z}$ for $n\leq 2$ and $\mathrm{H}_n(\M,\mathbb{Z})=0$, for $n\geq 3$.

\begin{small}
\renewcommand{\refname}{\Large\textsc{References}}
\bibliographystyle{plain}
\bibliography{biblioCURRENT}

\def\cprime{$'$}
\begin{thebibliography}{10}

\bibitem{Anick86}
David~J. Anick.
\newblock On the homology of associative algebras.
\newblock {\em Trans. Amer. Math. Soc.}, 296(2):641--659, 1986.

\bibitem{Dershowitz87}
Nachum Dershowitz.
\newblock Termination of rewriting.
\newblock {\em J. Symbolic Comput.}, 3(1-2):69--115, 1987.
\newblock Rewriting techniques and applications (Dijon, 1985).

\bibitem{DershowitzManna79}
Nachum Dershowitz and Zohar Manna.
\newblock Proving termination with multiset orderings.
\newblock {\em Comm. ACM}, 22(8):465--476, 1979.

\bibitem{GaussentGuiraudMalbos15}
St{\'e}phane Gaussent, Yves Guiraud, and Philippe Malbos.
\newblock Coherent presentations of {A}rtin monoids.
\newblock {\em Compos. Math.}, 151(5):957--998, 2015.

\bibitem{GuiraudMalbos12mscs}
Yves Guiraud and Philippe Malbos.
\newblock Coherence in monoidal track categories.
\newblock {\em Math. Structures Comput. Sci.}, 22(6):931--969, 2012.

\bibitem{GuiraudMalbos12advances}
Yves Guiraud and Philippe Malbos.
\newblock Higher-dimensional normalisation strategies for acyclicity.
\newblock {\em Adv. Math.}, 231(3-4):2294--2351, 2012.

\bibitem{GuiraudMalbos16}
Yves Guiraud and Philippe Malbos.
\newblock Polygraphs of finite derivation type.
\newblock {\em Mathematical Structures in Computer Science}, pages 1--47, 009
  2016.

\bibitem{HageMalbos16}
Nohra {Hage} and Philippe {Malbos}.
\newblock {Knuth's Coherent Presentations of Plactic Monoids of Type A}.
\newblock ArXiv e-prints 1609.01460, 2016.

\bibitem{KapurNarendran85}
Deepak Kapur and Paliath Narendran.
\newblock A finite {T}hue system with decidable word problem and without
  equivalent finite canonical system.
\newblock {\em Theoret. Comput. Sci.}, 35(2-3):337--344, 1985.

\bibitem{Kilibarda97}
Vesna Kilibarda.
\newblock On the algebra of semigroup diagrams.
\newblock {\em Internat. J. Algebra Comput.}, 7(3):313--338, 1997.

\bibitem{Kobayashi90}
Yuji Kobayashi.
\newblock Complete rewriting systems and homology of monoid algebras.
\newblock {\em J. Pure Appl. Algebra}, 65(3):263--275, 1990.

\bibitem{Lafont95}
Yves Lafont.
\newblock A new finiteness condition for monoids presented by complete
  rewriting systems (after {C}raig {C}. {S}quier).
\newblock {\em J. Pure Appl. Algebra}, 98(3):229--244, 1995.

\bibitem{Newman42}
Maxwell Newman.
\newblock On theories with a combinatorial definition of ``equivalence''.
\newblock {\em Ann. of Math. (2)}, 43(2):223--243, 1942.

\bibitem{Pride95}
Stephen~J. Pride.
\newblock Low-dimensional homotopy theory for monoids.
\newblock {\em Internat. J. Algebra Comput.}, 5(6):631--649, 1995.

\bibitem{Squier87a}
Craig Squier and Friedrich Otto.
\newblock The word problem for finitely presented monoids and finite canonical
  rewriting systems.
\newblock In {\em Rewriting techniques and applications ({B}ordeaux, 1987)},
  volume 256 of {\em Lecture Notes in Comput. Sci.}, pages 74--82. Springer,
  Berlin, 1987.

\bibitem{Squier87}
Craig~C. Squier.
\newblock Word problems and a homological finiteness condition for monoids.
\newblock {\em J. Pure Appl. Algebra}, 49(1-2):201--217, 1987.

\bibitem{Squier94}
Craig~C. Squier, Friedrich Otto, and Yuji Kobayashi.
\newblock A finiteness condition for rewriting systems.
\newblock {\em Theoret. Comput. Sci.}, 131(2):271--294, 1994.

\bibitem{vanOostrom94}
Vincent van Oostrom.
\newblock Confluence by decreasing diagrams.
\newblock {\em Theoret. Comput. Sci.}, 126(2):259--280, 1994.

\end{thebibliography}
\end{small}

\vspace{1cm}

\quad

\begin{small}
\noindent \textsc{Cl\'ement Alleaume} \\
\url{clement.alleaume@univ-st-etienne.fr} \\
Univ Lyon, Universit\'e Claude Bernard Lyon 1\\
CNRS UMR 5208, Institut Camille Jordan\\
43 blvd. du 11 novembre 1918\\
F-69622 Villeurbanne cedex, France
\end{small}

\vskip+5pt

\begin{small}
\noindent \textsc{Philippe Malbos} \\
\url{malbos@math.univ-lyon1.fr} \\
Univ Lyon, Universit\'e Claude Bernard Lyon 1\\
CNRS UMR 5208, Institut Camille Jordan\\
43 blvd. du 11 novembre 1918\\
F-69622 Villeurbanne cedex, France
\end{small}

\vspace{1cm}

\hfill \begin{small}---\;\;\today\;\;-\;\;\hhmm\;\;---\end{small}

\end{document}